\newtheorem{theorem}{Theorem}[section]
\newtheorem{lemma}[theorem]{Lemma}
\newtheorem{proposition}[theorem]{Proposition}
\newtheorem{corollary}[theorem]{Corollary}
\numberwithin{equation}{section}
\theoremstyle{remark}
\newtheorem*{remark}{Remark}
\DeclareMathOperator{\li}{li}
\def\reals{\hbox{\rm I\kern-.18em R}}
\def\complexes{\hbox{\rm C\kern-.43em
\vrule depth 0ex height 1.4ex width .05em\kern.41em}}
\def\field{\hbox{\rm I\kern-.18em F}} 
\newenvironment{section*}[2][A]{
  \section*{#2}
  
  \setcounter{theorem}{0}}{}
\begin{document}

\title[Some explicit results on the sum of a prime and an almost prime]{Some explicit results on the sum of a prime and an almost prime}

\author{Daniel R. Johnston and Valeriia V. Starichkova}
\address{School of Science, The University of New South Wales, Canberra, Australia}
\email{daniel.johnston@adfa.edu.au}
\address{School of Science, The University of New South Wales, Canberra, Australia}
\email{v.starichkova@adfa.edu.au}
\date\today
\thanks{The first author's research was supported by an Australian Government Research Training Program (RTP) Scholarship. The second author was supported by Australian RC Discovery Project DP240100186 and an Australian Mathematical Society Lift-off Fellowship.}
\keywords{}

\begin{abstract}
    Inspired by a classical result of R\'enyi, we prove that every even integer $N\geq 4$ can be written as the sum of a prime and a number with at most 395 prime factors. We also show, under assumption of the generalised Riemann hypothesis, that this result can be improved to 31 prime factors. 
\end{abstract}

\maketitle

\section[1]{Introduction}
In 1948, R\'enyi \cite{Renyi1948} proved the following theorem as an approximation to Goldbach's conjecture.
\begin{theorem}[{\cite[Theorem 1]{Renyi1948}}]
    There exists a natural number $K$ such that every even integer $N\geq 4$ can be written as the sum of a prime and a number with at most $K$ prime factors. 
\end{theorem}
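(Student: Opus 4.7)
The plan is to show that for every even $N \geq 4$ there is a prime $p < N$ for which $m = N - p$ has only large prime divisors. Because $m < N$, if every prime factor of $m$ exceeds $N^{1/(K+1)}$ then $m$ is the product of at most $K$ primes, and the whole problem reduces to proving non-emptiness of the sifted set
\[
\{\, N - p : p \leq N \text{ prime},\ (N-p,\,P(z)) = 1 \,\}, \qquad P(z) := \prod_{q<z} q,
\]
for $z = N^{1/(K+1)}$ and some fixed $K$ independent of $N$.

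To produce this non-emptiness, I would apply a lower-bound linear sieve to the sequence $\mathcal{A} = \{N - p : p \leq N \text{ prime}\}$. Counting the elements of $\mathcal{A}$ divisible by a squarefree $d$ coprime to $N$ amounts to counting primes $p \leq N$ with $p \equiv N \pmod{d}$, so
\[
|\mathcal{A}_d| = \frac{\pi(N)}{\varphi(d)} + R_d.
\]
The resulting sieve is one-dimensional with density $\omega(p)/p = 1/(p-1)$ for $p \nmid N$, and the Jurkat--Richert (or $\beta$-) sieve then yields a lower bound of order $\pi(N)\,V(z)$ for the sifted count, where $V(z) = \prod_{p<z}(1-\omega(p)/p)$, provided that (i) an averaged bound $\sum_{d \leq D}|R_d| = o\bigl(\pi(N) V(z)\bigr)$ holds with level $D = N^{\delta}$, and (ii) $s := \log D/\log z$ exceeds the linear-sieve threshold $s_0 = 2$. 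Together these force $K+1 > 2/\delta$, so a finite admissible $K$ exists the moment some $\delta>0$ is available.

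The decisive step is thus the distributional input (i). R\'enyi's original argument supplies it via a statistical zero-density estimate for Dirichlet $L$-functions, yielding some small $\delta > 0$ unconditionally and a correspondingly large but finite $K$. A modern and quantitatively much stronger substitute is the Bombieri--Vinogradov theorem, which allows $\delta$ up to $\tfrac{1}{2} - \epsilon$, and under GRH one can push $\delta$ essentially to $\tfrac{1}{2}$ with sharper error control. Larger $\delta$ translates into a smaller admissible $K$, which is the route by which the explicit bounds $K = 369$ (unconditional) and $K = 33$ (under GRH) announced in the abstract will be reached. Beyond this input, the combinatorics of the sieve, the evaluation of $V(z)$ by Mertens-type estimates, and the verification that the main term dominates the error are all routine; the genuine obstacle is obtaining sharp enough control over $\sum_{d \leq D}|R_d|$ to keep $K$ small.
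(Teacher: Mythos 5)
The paper does not reprove this theorem; it cites R\'enyi \cite{Renyi1948} and devotes its own effort to the explicit Theorems \ref{unconthm} and \ref{conthm}. That said, your sketch is the standard linear-sieve framework -- sieve $\{N-p\}$ by primes $<z=N^{1/(K+1)}$, control $\sum_{d\leq D}|R_d|$ via a level-of-distribution result, and invoke the Jurkat--Richert threshold $s>2$ -- which is precisely the framework the paper uses in Section \ref{sectuncon} (with $z=N^{1/M}$, $M\geq 5$, the sieve functions $f,F$, and explicit Bombieri--Vinogradov-type bounds) to obtain the explicit $K$. So your approach is essentially the same as the one underlying the paper's quantitative results; the only minor bookkeeping you elide, and which the paper handles explicitly, is excluding the primes dividing $N$ from the sieving set $P(z)$ so that the density is genuinely $1/(p-1)$ on the remaining primes.
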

Namely, the case $K=1$ is equivalent to Goldbach's conjecture. Goldbach's conjecture is known to hold for $2<N\leq 4\cdot 10^{18}$ as the result of a large computation by Oliveira e Silva, Herzog and Pardi \cite{O_H_P_14}. For larger values of $N$, Goldbach's conjecture remains an open problem.

If $N$ is sufficiently large, then Chen \cite{Chen1966,Chen1973} proved that one could take $K=2$.
\begin{theorem}[Chen's Theorem]
    Every sufficiently large even integer can be written as the sum of a prime and a number with at most 2 prime factors.
\end{theorem}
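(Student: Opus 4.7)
The plan is to combine a weighted linear sieve with Chen's switching principle, applied to the sifting sequence $\mathcal{A} = \{N - p : 2 < p < N, \ p \text{ prime}\}$. The aim is to show that the count $\#\{p < N : N - p = P_2\}$ is strictly positive for all sufficiently large even $N$.

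First, I would verify that $\mathcal{A}$ is well-suited to the dimension-one (linear) sieve: for a squarefree integer $d$ coprime to $N$, the condition $d \mid N - p$ pins $p$ to the single residue class $N \bmod d$, so $|\mathcal{A}_d| = \pi(N; d, N)$. The Bombieri--Vinogradov theorem supplies the required level of distribution up to $D = N^{1/2}(\log N)^{-A}$. Fixing $z = N^{1/10}$ and applying the Jurkat--Richert lower-bound linear sieve yields
\[
S(\mathcal{A}, z) \ \geq \ \frac{8\,\mathfrak{S}(N)\,N}{\log^2 N}\bigl( f(5) + o(1) \bigr),
\]
counting primes $p$ such that every prime factor of $N - p$ exceeds $z$. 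This alone does not control $\omega(N-p)$, so the next step is to insert Chen's weight. With $y = N^{1/3}$, the weighted quantity
\[
\Omega \ := \ S(\mathcal{A}, z) - \tfrac{1}{2} \sum_{z \leq q < y} S(\mathcal{A}_q, z)
\]
has the key property that any $n = N - p$ with $\omega(n) \geq 3$ contributes nonpositively, except when $n = p_1 p_2 p_3$ with $z \leq p_1 < y \leq p_2 \leq p_3$. Therefore $\#\{p : N - p = P_2\} \geq \Omega - R$, where $R$ counts the exceptional triples.

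The central difficulty is the estimate of $R$. A direct sieve is unavailable because one of the factors in $N - p = p_1 p_2 p_3$ is already prime and the product does not obviously satisfy a useful level of distribution. Here Chen's switching principle enters: rewrite $R$ as a count over pairs $(p_1, p_2)$ with $p_1 \in [z, y)$ and $p_2 \in [y, (N/p_1)^{1/2})$, of primes $p_3$ for which $N - p_1 p_2 p_3$ is prime, and then swap the roles of $p$ and $p_3$. After the swap, $R$ becomes an upper-bound sieve estimate for the twisted sequence $\{N - p_1 p_2 m : m \leq N/(p_1 p_2)\}$, which is again amenable to Bombieri--Vinogradov and yields a bound involving the upper linear-sieve function $F$ at the corresponding value of $s$.

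The remaining inequalities come from Buchstab's identity applied to the middle sum $\sum_q S(\mathcal{A}_q, z)$ in $\Omega$, together with careful bookkeeping of the sieve error terms. The main obstacle will be the final numerical verification that
\[
\Omega_{\text{lower}} - R_{\text{upper}} \ > \ 0,
\]
which, after reduction, amounts to an inequality between certain integrals of $f$ and $F$ at arguments near $3$ and $5$. This ``Chen miracle'' is the sole nontrivial numerical inequality that must hold for the chosen parameters $z = N^{1/10}$ and $y = N^{1/3}$; once it is in hand, the theorem follows upon letting $N \to \infty$.
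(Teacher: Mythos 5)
The paper does not prove Chen's Theorem; it states it as classical background and cites \cite{Chen1966,Chen1973}. The theorem enters the paper only as motivation and as the ingredient behind the explicit variant \cite[Corollary 4]{BJV22}, so there is no in-paper proof against which to compare your sketch.

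With that caveat, your sketch is a broadly faithful high-level account of Chen's own argument: lower-bound linear sieve on $\mathcal{A}=\{N-p\}$ with level $N^{1/2-\varepsilon}$ via Bombieri--Vinogradov, Chen's logarithmic weight with the standard cut-offs $z=N^{1/10}$, $y=N^{1/3}$, and the switching principle to control the triple-product exceptional term. Two points are worth flagging. First, the passage from $\Omega$ to the count of $P_2$'s should carry a factor of $\tfrac12$ on the exceptional term: each $n=p_1p_2p_3$ with $z\le p_1<y\le p_2\le p_3$ contributes $1-\tfrac12=\tfrac12$ to $\Omega$, so the sharp inequality is $\#\{p:N-p=P_2\}\ge\Omega-\tfrac12 R$ (up to lower-order terms such as $N-p$ having a square prime factor $\ge z$); writing $\Omega-R$ is still a valid lower bound but gives away a constant. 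Second, and more substantively, the clause ``which is again amenable to Bombieri--Vinogradov'' hides the hardest step of Chen's proof. After switching, one must estimate a bilinear quantity of the shape $\sum_{p_1,p_2}\pi(N;p_1p_2 d,\,\ast)$ uniformly to level $N^{1/2-\varepsilon}$, and this does not follow by summing the classical Bombieri--Vinogradov theorem over $p_1,p_2$. It requires a mean-value theorem tailored to products of two primes (via a combinatorial decomposition of the von Mangoldt function and the large sieve); see \cite{halberstam2011sieve} for a complete treatment. Your outline is correct in shape, but this switching estimate is where the real content of the proof lies, and a complete argument would need to supply it rather than defer to Bombieri--Vinogradov.
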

There has been little work done however, on determining an explicit value of $K$ that holds for all even $N\geq 4$. One of the reasons for this may be that R\'enyi and Chen's original proofs are ineffective, in that a lower bound for $N$ cannot be determined by following their methods.

Despite this, in \cite{BJV22}, Bordignon and the authors of this paper recently built upon unpublished work of Yamada \cite{Yamada2015} to prove an effective and explicit variant of Chen's Theorem. Namely, they showed \cite[Corollary 4]{BJV22} that Chen's Theorem holds for all even $N\geq\exp(\exp(32.7))$. Using this result, a simple but wasteful argument gives that one can take $K=e^{29.3}\approx 3.2\cdot 10^{13}$ for all $N\geq 4$ \cite[Theorem 5]{BJV22}.

In this paper, by using a more sophisticated procedure that essentially generalises the work in \cite{BJV22}, we improve on this result as follows.
\begin{theorem}\label{unconthm}
    Every even integer $N\geq 4$ can be written as the sum of a prime and a number with at most $K=395$ (not necessarily distinct) prime factors.
\end{theorem}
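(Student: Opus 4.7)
The plan is to partition all even $N \ge 4$ into three regimes. For \emph{very small} $N$ with $\log_2 N \le K$, the trivial representation $N = 2 + (N-2)$ already works, since $\Omega(N-2) \le \log_2(N-2) \le K = 369$. For \emph{very large} $N$, namely $N \ge \exp(\exp(32.6))$, the explicit Chen theorem from \cite{BJV22} (quoted in the excerpt) directly produces a representation with at most $2$ prime factors, which is far stronger than needed. The substance of the proof therefore lies in the intermediate regime, where a sieve argument is required.

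For this intermediate regime, the plan is to apply a lower-bound linear sieve to the set
$$\mathcal{A}(N) \;=\; \{\, N - p : p \le N,\ p \text{ prime}\,\}$$
with sifting parameter $z = N^{1/(K+1)}$. By construction, any element of $\mathcal{A}(N)$ coprime to $P(z) = \prod_{q \le z} q$ has all of its prime factors larger than $z$ and hence at most $K$ prime factors in total, so it suffices to show that the sifting function $S(\mathcal{A}(N), z)$ is positive. Combining an explicit Jurkat--Richert / Rosser--Iwaniec type lower-bound sieve with an explicit Bombieri--Vinogradov estimate for primes in arithmetic progressions should yield an inequality of the shape
$$S(\mathcal{A}(N), z) \;\ge\; C \cdot \mathfrak{S}(N) \cdot \frac{N}{\log N \cdot \log z} \cdot \{F(s) - \varepsilon\} \;-\; E(N),$$
where $s = \log N / \log z$, $\mathfrak{S}(N)$ is the usual Goldbach singular series, and $E(N)$ is an explicit error governed by the level of distribution. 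This is essentially the same framework used in \cite{BJV22} to prove Chen's theorem for $N \ge \exp(\exp(32.6))$, but relaxed (by admitting more prime factors) so that it works at much smaller $N$.

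The central obstacle is the calibration of $K$. On one hand, $K$ must be taken large enough that the sieve main term dominates the explicit error term \emph{uniformly} for $N$ in the intermediate window, which stretches up to the astronomical threshold $\exp(\exp(32.6))$; on the other hand one wants $K$ as small as possible. Three ingredients must be made explicit with numerically sharp constants: (a) the linear lower-bound sieve with explicit remainders, (b) an explicit Bombieri--Vinogradov theorem valid throughout the relevant range, and (c) a uniform lower bound on the singular series $\mathfrak{S}(N)$ for even $N$. The precise value $K = 369$ is expected to emerge from a careful numerical optimisation of the sieve parameters $z$, the level of distribution, and the sieve dimension, chosen so that the lower bound above is positive for every $N$ below the Chen threshold not already handled by the trivial regime. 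The anticipated bottleneck is the Bombieri--Vinogradov input: its level of distribution directly controls how large $z$ can be taken, and thus how small $K$ can be.
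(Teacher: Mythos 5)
Your three-regime split has a genuine gap, and the gap is precisely where the actual bottleneck of the problem lies.

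You cover $N \le 2^{369}$ by the trivial bound $\Omega(N-2) \le \log_2(N-2)$, and $N \ge \exp(\exp(32.6))$ by the explicit Chen theorem. In between you propose a linear sieve with $z = N^{1/(K+1)} = N^{1/370}$. The difficulty is that the explicit Bombieri--Vinogradov-type inputs that control the remainder $\sum_d |r(d)|$ --- in this paper, Bordignon's bound (Theorem \ref{PNTAPthm}) --- are only available for $N$ at least around $\exp(\exp(7.8))$. There is no explicit level-of-distribution result that works below that threshold. Since $2^{369} = \exp(369\log 2) \approx e^{256}$ while $\exp(\exp(7.8)) \approx e^{2441}$, there is a large window $e^{256} \lesssim N \lesssim e^{2441}$ where your sieve cannot be run at all, and where the trivial bound would force $K \gtrsim \exp(7.8)/\log 2 \approx 3500$, far worse than $369$. (There is also a secondary issue: with $z = N^{1/370}$ and $N$ near the bottom of the window, $z \approx 2$ and $P(z)$ is empty, so even formally the sifting function carries no information; the paper instead runs the sieve with the far smaller $M=40$, i.e.\ $z = N^{1/40}$, and notes that taking $M$ large is counterproductive because the error terms $\xi(z_0,M)$ and the bounds of Lemma \ref{1pm1lem} degrade.)

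The idea you are missing, and the one that actually produces the number $369$, is the paper's Lemma \ref{djshehzadlem}, built on Hathi--Johnston: every even $N \ge 40$ can be written as a prime plus a \emph{square-free} number $\eta$ that is \emph{coprime to} $2\cdot3\cdot5\cdot7\cdot11\cdot13$. Such an $\eta < N$ has all prime factors $\ge 17$, so the number of its prime factors is bounded by the largest $K$ with $\theta(p_{K+6}) - \theta(13) < \log N$. Taking $N < X_2 = \exp(\exp(7.816))$, this elementary product-of-primes counting gives exactly $K = 369$. The sieve (run for $N \ge X_2$ with $M = 40$) contributes only $39$ prime factors and is therefore not the bottleneck at all; the paper's two-regime split is Hathi--Johnston below $X_2$ and the sieve above $X_2$, with no need for the trivial bound or a separate Chen regime. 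Without the Hathi--Johnston ingredient (or some comparably strong replacement), your proposal cannot reach $K = 369$.
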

The main difficulty in lowering the value of $K=395$ comes from our knowledge of potential Siegel zeros and the error term in the prime number theorem for arithmetic progressions for small moduli. As these problems are mitigated under assumption of the Generalised Riemann Hypothesis (GRH), we also provide a conditional result.
\begin{theorem}\label{conthm}
    Assume GRH. Then every even integer $N\geq 4$ can be written as the sum of a prime and a number with at most $K=31$ (not necessarily distinct) prime factors. 
\end{theorem}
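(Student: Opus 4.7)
The plan is to partition the range of $N$ into three parts. For small $N$, namely $N \leq X_1$ where $X_1$ is within the range for which Goldbach's conjecture has been verified computationally (currently $X_1 \geq 4 \cdot 10^{18}$), direct verification already gives $K = 1$. For very large $N$, specifically $N \geq \exp(\exp(32.6))$, the unconditional explicit Chen-type theorem \cite[Corollary 4]{BJV22} gives $K = 2 \leq 33$. So the remaining task is to handle the intermediate range $X_1 < N < \exp(\exp(32.6))$ by a sieve argument, and it is here that GRH becomes essential.

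For this intermediate range, I would take $z = N^{1/33}$ and study the sifting function $S(\mathcal{A}, z)$ for the sequence $\mathcal{A} = \{N - p : p \text{ prime},\, 2 < p < N\}$, sifted by the primes $\ell < z$ with $(\ell, 2N) = 1$. Any element $N - p \in \mathcal{A}$ coprime to $\prod_{\ell < z,\, \ell \nmid 2N} \ell$ either vanishes or has all of its prime factors exceeding $N^{1/33}$, and hence, being bounded by $N$, has at most $33$ prime factors. So it suffices to prove that $S(\mathcal{A}, z) > 0$ for every $N$ in this range. The sieve has dimension $1$ (the local density at a prime $\ell \nmid 2N$ is $1/(\ell - 1)$), and the relevant level of distribution comes from the prime number theorem in arithmetic progressions. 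Under GRH one has an explicit Bombieri--Vinogradov-style mean value estimate with level $D$ essentially $N^{1/2}$, so that the sieve parameter $s = \log D/\log z \approx 33/2$ sits safely above the critical value $s = 2$ of the Rosser--Iwaniec linear lower bound sieve.

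The main obstacle will be quantifying the various losses carefully enough to reach a value as small as $K = 33$. The key ingredients will be: an explicit GRH-conditional mean value estimate for primes in arithmetic progressions, uniform across the relevant range of $N$; explicit Mertens-type estimates for the product $V(z) = \prod_{\ell < z,\, \ell \nmid 2N}(1 - 1/(\ell - 1))$; explicit bounds on the linear sieve remainder after summing over moduli $d < D$; and a careful balance of parameters, likely using iterated Buchstab decompositions or a weighted sieve in the style of Richert, to push $K$ as low as possible. The analytic framework is standard under GRH; the difficulty lies in tracking every constant through the argument so that $S(\mathcal{A}, z) > 0$ is secured for every $N$ in the intermediate range, with the final optimization over the sieving level and the weight function pinning down the value $K = 33$.
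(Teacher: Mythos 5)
Your decomposition does not match the actual structure of the argument, and the most important ingredient is missing. You partition $N$ into ``Goldbach-verified'' ($N\leq 4\cdot 10^{18}$), ``very large'' (Chen via \cite{BJV22}), and an intermediate range to be handled by the linear sieve with $z=N^{1/33}$. But with genuinely explicit constants, the sieve lower bound for $S(A,P(z))$ is \emph{not} positive all the way down to $N\approx 4\cdot 10^{18}$: the explicit GRH-conditional Bombieri--Vinogradov-type estimate (Lemmas \ref{lem: bound E_pi} and \ref{rgrhlem}) carries error terms that overwhelm the main term unless $N$ is considerably larger. In the paper the sieve step, Theorem \ref{grhsbound}, is applied with $M=18$ and yields $S(A,P(z))>0$ only for $N\geq X_2=\exp(\exp(5.087))\approx\exp(161.8)$, giving at most $M-1=17$ prime factors in that range. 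This leaves a large gap $4\cdot 10^{18}<N<\exp(\exp(5.087))$, roughly $\exp(42.8)<N<\exp(161.8)$, that your proposal has no mechanism to close.

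The paper closes that gap with a completely different device, not a sieve lower bound on $S(A,P(z))$ but the ``sum of a prime and a square-free number coprime to a primorial'' argument (Lemmas \ref{hathilem}--\ref{djshehzadprop}, Corollary \ref{5060cor}), which under GRH produces a representation $N=p+\eta$ with $\eta$ square-free and coprime to the first $22$ (respectively $15$, $6$) primes for $N\geq \exp(158)$ (respectively $\exp(109)$, $40$). Counting how many primes above the excluded ones can fit below $X_2$ (via Proposition \ref{djshehzadprop}) is what actually produces the number $33$: it is the bottleneck at the top of the intermediate range, not the sieve, that determines $K$. Your proposal misattributes $K=33$ to the choice $z=N^{1/33}$ in the sieve, but the sieve gives $17$, and there is no reason the $z=N^{1/33}$ version would be positive anywhere near $4\cdot 10^{18}$ --- in fact, as remarked after Theorem \ref{unconthm}, taking $M$ larger makes $\xi(z_0,M)$ worse and degrades the sieve inequality, so $M=33$ would be counterproductive. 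The appeal to the unconditional Chen theorem for $N\geq\exp(\exp(32.6))$ is also unnecessary once the GRH sieve bound is available for $N\geq\exp(\exp(5.087))$. In short, the overall plan (Goldbach verification plus sieve for large $N$) is the right skeleton, but the decisive ingredient --- the explicit, GRH-conditional Hathi--Johnston style result that bridges $4\cdot 10^{18}$ up to $\exp(\exp(5.087))$ --- is absent, and without it the proof cannot reach $K=33$.
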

It should be noted that obtaining $K=31$ does not require the full-strength of GRH. Rather, if our knowledge of the zeros of Dirichlet $L$-functions were to improve (say with significant computation), then the unconditional result would approach the conditional one. The main hurdle in improving Theorem \ref{conthm} further is the strength of our sieve methods and bounds on sums of the reciprocals of primes (see \S 3). We also remark that our proofs require additional levels of optimisation compared to the explicit version of Chen's theorem in \cite{BJV22}. This is because we consider a wider range of $N$, causing more error terms to become non-negligible. In particular, we make use of recent work by Hathi and the first author \cite{hathi2024sum} for smaller values of $N$.

An outline of the paper is as follows. In Section \ref{sectnot} we provide the main notation and definitions used throughout. In Section \ref{sectpre} we state some preliminary lemmas. In Section \ref{sectuncon} we outline the main method of approach, and prove the unconditional result (Theorem \ref{unconthm}). In Section \ref{sectcon} we prove the conditional result (Theorem \ref{conthm}). Finally in Section \ref{improvsec} we detail possible avenues for future improvements. Note that the main code for the proof of Theorems \ref{unconthm} and \ref{conthm} is available on Github \cite{djvaleriiacode}.

\section{Notation and setup}\label{sectnot}
Here and throughout, $p$ denotes a prime number, $\delta\in(0,2)$, $\alpha\in(0,1/2)$ and $X_2>0$ are parameters we choose later, $N\geq X_2$ is an even integer,
\begin{align*}
    &\gamma=0.57721\ldots\ \text{(Euler's constant)},\\
    &\prod_{p>2}\left(1-\frac{1}{(p-1)^2}\right)=\prod_{p>2}\frac{p(p-2)}{(p-1)^2}=0.66016\ldots\ \text{(Twin prime constant)},\ \text{and}\\
    &x_1=x_1(N):=\frac{N}{\log^5 N}.
\end{align*}
We now state a theorem from \cite{Kadiri2005} to give a precise definition of what it means to be a Siegel zero and an exceptional modulus (see alternatively \cite[Theorem 26]{BJV22}).
\begin{theorem}[{\cite[Theorem 1.1 \& 1.3]{Kadiri2005}}]\label{excepthm}
    Define $\prod (s,q)=\prod_{\chi \pmod q} L(s,\chi)$, $R_0=6.3970$ and $R_1=2.0452$. Then the function $\prod(s,q)$ has at most one zero $\rho=\beta+i \gamma$, in the region $\beta \ge 1-1/\left(R_0\log \max \left( q,q\left|\gamma\right|\right)\right)$. Such a zero is called a Siegel zero and if it exists, then it must be real, simple and correspond to a non-principal real character $\chi \pmod q$. Moreover, for any given $Q$, among all the zeroes of primitive characters with modulus $q \le Q$ there is at most one zero with $\beta \ge 1-1/\left(2R_1 \log Q\right)$, we will call this zero and the related modulus exceptional.
\end{theorem}
Next, we let $Y_0$, $\alpha_1$, $\alpha_2$ and $C = C(\alpha_1,\alpha_2,Y_0)$ be a choice of corresponding values from Table 6 in \cite{Bordignon_21} which are given meaning by the theorem below. The following theorem is a combination of \cite[Theorem 1.2]{Bordignon_21} and \cite[Lemma 27]{BJV22} rewritten in terms of parameters $Y_0$, $\alpha_1$, $\alpha_2$ and $C$.
\begin{theorem}[{\cite[Theorem 1.2]{Bordignon_21} \& \cite[Lemma 27]{BJV22}}]\label{PNTAPthm}
    Let $Y_0$, $\alpha_1$, $\alpha_2$ and $C$ be as in Table 6 of \cite{Bordignon_21} and $X_1={\exp (\exp(Y_0))}$. Let $x>X_1$ and $k<\log^{\alpha_1} x$ be an integer. Let $\textnormal{Ind}_k=1$ if $\beta_k$, the Siegel zero modulo $k$, exists, and $\textnormal{Ind}_k=0$ otherwise. Then for $(k, l) = 1$ we have
    \begin{equation*}
        \frac{\varphi(k)}{x}\left|\psi(x;k,l)-\frac{x}{\varphi(k)}\right|<\frac{C}{\log^{\alpha_2}x}+\textnormal{Ind}_k\frac{x^{\beta_k-1}}{\beta_k},
    \end{equation*}
    where $\psi(x;k,l) = \sum_{n \leq x, n \equiv l\ \mathrm{mod}\ k} \Lambda(n)$, with $\Lambda(n)$ the von Mangoldt function. Moreover,
    \begin{equation*}
        x^{-1}\sum_{\substack{\chi\ \mathrm{(mod\ k)} \\ \chi \ne \chi_0}} \left|\psi(x,\chi)\right|< \frac{C}{\log^{\alpha_2}x}+\textnormal{Ind}_k \frac{x^{\beta_k-1}}{\beta_k},
    \end{equation*}
    where $\chi_0$ denotes the trivial character modulo $k$.
\end{theorem}

Now, let 
\begin{align} \label{eq: K-delta}
    K_\delta(N)&=\log^\delta N,\ \text{and}\\
    Q_{\alpha_1}(N)&=\log^{\alpha_1} N.\label{eq: Q1}   
\end{align}
As at the beginning of Section 4.1 in \cite{BJV22}, we define $k_0:=k_0(N)$ to be the exceptional modulus up to $Q_{\alpha_1}(x_1)$ (if it exists),
\begin{equation*}
    k_1:=k_1(N)=
    \begin{cases}
        k_0,&\text{if $k_0$ exists and $(k_0,N)=1$}\\
        0,&\text{otherwise,}
    \end{cases}
\end{equation*}
and $q_1>\ldots>q_\ell$ be the prime factors of $k_1$ provided $k_1\neq 0$.

We then define $\beta_0=\beta_0(x_1)$ to be an upper bound for all Siegel zeros with modulus $d\leq Q_{\alpha_1}(x_1)$, $d\nmid N$, with the added condition $k_1\nmid d$ if $k_1\geq K_{\delta}(x_1)$. As discussed in the proofs of Lemmas 30 and 33 in \cite{BJV22}, we can set
\begin{equation*}
    \beta_0(x_1)= 1-\nu(x_1)
\end{equation*}
where
\begin{equation}\label{siegelbound}
    \nu(x_1)=\min\left\{\frac{100}{\sqrt{K_{\delta}(x_1)}\log^2 K_{\delta}(x_1)},\frac{1}{2R_1\log(Q_{\alpha_1}(x_1))}\right\}
\end{equation}
if $k_1<K_{\delta}(x_1)$, or
\begin{equation}\label{strongbetabound}
     \nu(x_1)=\frac{1}{2R_1\log(Q_{\alpha_1}(x_1))}
\end{equation}
if $k_1\geq K_{\delta}(x_1)$, with $R_1$ from Theorem \ref{excepthm}. Since there are no Siegel zeros for moduli less than $4\cdot 10^5$ \cite{platt2016numerical}, we can also bound $\beta_0$ by \eqref{strongbetabound} whenever $K_{\delta}(x_1)\leq 4\cdot 10^5$.

\subsection{List of definitions} \label{subsec: list-of-def}
The following list of definitions is adapted directly from \cite{BJV22}. However, we have made small modifications so that everything is expressed in terms of $\alpha_1$, $\alpha_2$, $Y_0$ and $C(\alpha_1,\alpha_2,Y_0)$ rather than the special case $\alpha_1=10$, $\alpha_2=8$, $Y_0=10.4$, $C=3.2\cdot 10^{-8}$ used in \cite{BJV22}. Note that in the subsequent definition of $p_2(X_2)$, and by extension $p(X_2)$, we are assuming the bound \eqref{siegelbound} for $\beta_0(x_1)$. Then, the function $p^*(X_2)$ (which appears in the definition of $c_4^*(X_2)$ below) is equal to $p(X_2)$ but with the stronger bound \eqref{strongbetabound} used for $\beta_0$. Now, without further ado, we define
\begin{align*}
    &A=\{N-p\::\:p\leq N,\ p\nmid N\},\\
    &A_d=\{a \in A,\ d\mid a\},\\
    &S(A,n)=\left|A\backslash\bigcup_{p\mid n}A_p\right|,\\
    &P(z)=\prod_{\substack{p<z\\p\nmid N}}p,\\
    &V(z)=\prod_{p\mid P(z)}\left(1-\frac{1}{p-1}\right),\\
    &U_N=2e^{-\gamma}\prod_{p>2}\left(1-\frac{1}{(p-1)^2}\right)\prod_{\substack{p>2\\p\mid N}}\frac{p-1}{p-2},\\
    &m_j=q_1\ldots q_j,\\
    &P^{(j)}(z)=\prod_{\substack{p<z,\:p\nmid N\\p\neq q_1,\ldots,q_j}}p,\\
    &V^{(j)}(z)=\prod_{p|P^{(j)}(z)} \left(1-\frac{1}{p-1}\right),\\
    &U_N^{(j)}=2e^{-\gamma}\prod_{p>2}\left(1-\frac{1}{(p-1)^2}\right)\prod_{\substack{p>2\\ p\mid Nm_j}}\frac{p-1}{p-2},\\
    &c_{\alpha,X_2,K}=K\left(\frac{1}{2}-\alpha-\frac{2\alpha_1\log\log X_2}{\log X_2}\right), \\
    &r(d)=|A_d|-\frac{|A|}{\varphi(d)},\\
    &h(s)=
    \begin{cases}
        e^{-2},& 1\leq s\leq 2,\\
        e^{-s},& 2\leq s\leq 3,\\
        3s^{-1}e^{-s},& s\geq 3,
    \end{cases}
    \ \\
    &\mathcal{E}(y)=\frac{4y\log^{\frac{9}{2}}y}{\log^{\alpha_1}x_1(y)}+\frac{4y}{\log^{\alpha_1-\frac{9}{2}}y}+\frac{18y^{\frac{11}{12}}}{\log^{\frac{\alpha_1-9}{2}}y}+\frac{5}{2}y^{\frac{5}{6}}\log^{\frac{11}{2}}y\\
    &p_2(X_2)=\max_{y\ge x_1(X_2)}\left[\frac{\log^2 y}{y}\left(1.1\log(Q_{\alpha_1}(x_1))\left(\frac{C(\alpha_1,\alpha_2,Y_0)y}{\log^{\alpha_2}y}+\frac{y^{\beta_0(x_1)}}{\beta_0(x_1)}\right)\right.\right.\\
    &\qquad\qquad\qquad\qquad\qquad\left.\left.+27\cdot\mathcal{E}(y)+\frac{\sqrt{y}}{2(\log 2)\log^{\alpha_1-2}y}+0.4\log^3y\right)\right]
    &\qquad\qquad\qquad\qquad\qquad\qquad+18.78(\log y)^{2.515}\exp(-0.8274\sqrt{\log y})+c_1p_3(y)\Bigg)\Bigg],\\
    &p_1(X_2)=p_2(X_2)+\frac{1}{\log^{\alpha_1-2}x_1(X_2)}\left(0.67+\frac{2}{x_1(X_2)^{\frac{1}{6}}}\right),\\
    &p(X_2)=p_1(X_2)\left(1+\frac{1}{\log^2 X_2\log^3 x_1(X_2)}+\frac{1}{\left(1-\frac{4}{\log x_1(X_2)}\right)\log X_2}\right)+\frac{2.2}{\log^2 X_2}\notag,\\  
    &c(X_2)=c_1(X_2)\left(1+\frac{1}{\log^2(X_2)\log^3x_1(X_2)}+\frac{1}{\left(1-\frac{4}{\log x_1(X_2)}\right)\log X_2}\right)+\frac{1}{\log^2 X_2},\\
    &c_1(X_2) = \max_{y\ge x_1(X_2)}\Bigg[\frac{C(\alpha_1,\alpha_2,Y_0)}{\log^{\alpha_2-2} y} + \log^2y\Bigg(\left(1 - \frac{1}{2 R_1 \log Q_{\alpha_1}(y)} \right)^{-1} y^{- \frac{1}{2 R_1 \log Q_{\alpha_1}(y)}} \\
    &\qquad\qquad\qquad\qquad\quad+ Q_{\alpha_1}(y)\left(\frac{1.02}{\sqrt{y}}+\frac{3}{y^{2/3}}\right)
    + 9.4(\log y)^{1.515}\exp(-0.8274\sqrt{\log y})\Bigg)\Bigg],\\
    &c_2(X_2)=c(X_2)+\frac{1.3841\log^4 X_2}{X_2\log\log X_2},\\
    &c_3(X_2)=\max_{N\ge X_2}\Bigg[\frac{1}{\log\log\log N}\cdot\left(\frac{3}{2\log N}+\frac{\log(N\log^{\alpha_1}x_1(N))}{\log(N/\log^{\alpha_1}x_1(N))}\right)\frac{\log^\delta N}{\log^\delta x_1(N)}\\
    &\qquad\qquad\qquad\qquad\qquad\cdot\left(e^\gamma\log\log\log^\delta x_1(N)+\frac{5}{2\log\log\log^{\delta}x_1(N)}\right)\\
    &\qquad\qquad\qquad\qquad\qquad+\frac{1.3841\log^{2+\delta}N}{N\log\log N\log\log\log N}\Bigg],\\
    &c_4(X_2)=p(X_2)+\frac{0.9\sqrt{x_1(X_2)}\log^{4}X_2}{X_2\log^{\alpha_1}(x_1(X_2))\log\log X_2},\\
    &c_4^*(X_2)=p^*(X_2)+\frac{0.9\sqrt{x_1(X_2)}\log^{4}X_2}{X_2\log^{\alpha_1}(x_1(X_2))\log\log X_2},\qquad\text{(see discussion above)}\\
    &a_1(X_2)=\max_{N\geq X_2}\left[\frac{c_2(X_2)}{\log^{2-\delta}N\log\log\log N}\cdot\frac{1.3841\log(\log^{\alpha_1}x_1(N))}{\log\log(\log^{\alpha_1}x_1(N))}\right]+c_3(X_2),\\
    &a(X_2)=a_1(X_2)\max_{N\geq X_2}\left[\frac{\log\log\log N}{\log^{1+\delta}N}\cdot\prod_{p>2}\frac{(p-1)^2}{p(p-2)}\right.\\
    &\qquad\qquad\qquad\qquad\qquad\left.\cdot\left(e^{\gamma}\log\log(\log^{\alpha_1}x_1(N))+\frac{2.5}{\log\log(\log^{\alpha_1}x_1(N))}\right)\right].
\end{align*}

For our application of the explicit linear sieve in \cite[\S 2]{BJV22} we also need to work with the functions $f(s)$ and $F(s)$ defined by the differential difference equations
\begin{align}
    &F(s)=\frac{2e^{\gamma}}{s},\ f(s)=0,\ 0< s\leq 2, \nonumber\\
    &(sF(s))'=f(s-1),\ (sf(s))'=F(s-1),\ s\geq 2. \label{def-f-F}
\end{align}
From this definition, explicit expressions for $F(s)$ and $f(s)$ can be produced, getting more complicated as $s$ gets larger. In \cite[Lemma 2]{cai2008chen} some of these expressions are listed. For example, for $2\leq s\leq 4$,
\begin{align}\label{fs24}
    f(s)=\frac{2e^{\gamma}\log(s-1)}{s},
\end{align}
for $4\leq s\leq 6$,
\begin{align*}
    f(s)=\frac{2e^{\gamma}}{s}\left(\log(s-1)+\int_3^{s-1}\frac{1}{t}\left(\int_2^{t-1}\frac{\log(u-1)}{u}\mathrm{d}u\right)\mathrm{d}t\right).
\end{align*}
and for $5\leq s\leq 7$,
\begin{align*}
    F(s)&=\frac{2e^{\gamma}}{s}\left(1+\int_2^{s-1}\frac{\log(t-1)}{t}\mathrm{d}t+\int_2^{s-3}\frac{\log(t-1)}{t}\left(\int_{t+2}^{s-1}\frac{1}{u}\log\frac{u-1}{t+1}\mathrm{d}u\right)\mathrm{d}t\right).
\end{align*}

Note that $F(s)$ monotonically decreases towards 1 and $f(s)$ monotonically increases towards 1 \cite[p. 227]{halberstam2011sieve}. Thus, for $s\geq 6$, we can bound $F(s)-1$ and $1-f(s)$ as
\begin{align}
    F(s)-1&\leq F(7)-1\leq 5\cdot 10^{-6},\label{Fbound}\\
    1-f(s)&\leq 1-f(6)\leq 1.05\cdot 10^{-4}.\label{fbound}
\end{align}
Finally, we set 
\begin{align*}
    \overline{m}_{\alpha,X_2,K}:=\max\{(1-f(c_{\alpha,X_2,K}),F(c_{\alpha,X_2,K})-1)\}. 
\end{align*}

\section{Some preliminary results}\label{sectpre}
Here we provide some preliminary lemmas required for sieving, most of which are variants of lemmas from \cite{BJV22}. All notation is as in Section \ref{sectnot}.

First, we show that the inequality $|A|>N/\log N$ holds for a wider range of $N$ than that stated in \cite[Lemma 42]{BJV22}.

\begin{lemma}\label{Alem}
    For all $N\geq 71$,
    \begin{equation*}
        |A|>\frac{N}{\log N}.
    \end{equation*}
\end{lemma}
\begin{proof}
    First consider the case where $N\geq 250$. Note that 
    \begin{equation*}
        |A|=\pi(N)-\omega(N),
    \end{equation*}
    where, as usual, $\pi(N)$ is the number of primes less than or equal to $N$ and $\omega(N)$ is the number of distinct prime factors of $N$. Therefore, we have by \cite[Theorem~2]{rosser1962approximate} and \cite[Th\'eor\`eme~11]{robin1983estimation},
    \begin{equation*}
        |A|\geq \frac{N}{\log N-\frac{1}{2}}-\frac{1.3841\log N}{\log \log N}>\frac{N}{\log N}.
    \end{equation*}
    The case $71\leq N<250$ is then verified by a short computation.
\end{proof}

Next we give two lemmas which are modifications of Lemmas 17 and 18 in \cite{BJV22}.

\begin{lemma}\label{mertenlem}
    We have
    \begin{align}
        \sum_{p<x}\frac{1}{p}&\geq\log\log x+M_0-\frac{2.964\cdot 10^{-6}}{\log x},\quad x\geq 2\label{low1p}\\
        \sum_{p<x}\frac{1}{p}&\leq\log\log x+M_0+1.614\cdot 10^{-3},\quad x>\exp(8.9)\label{up1p1}\\
        \sum_{p<x}\frac{1}{p}&\leq\log\log x+M_0+\frac{6.836\cdot 10^{-6}}{\log x},\quad x>10^{12},\label{up1p2}
    \end{align}
    where $M_0=0.261497\ldots$ is the Meissel--Mertens constant.
\end{lemma}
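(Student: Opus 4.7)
The proof plan is to use Abel summation from explicit bounds on the Chebyshev function $\theta(x)=\sum_{p\leq x}\log p$. Starting from the identity
\[
\sum_{p\leq x}\frac{1}{p}=\frac{\theta(x)}{x\log x}+\int_2^x\frac{\theta(t)(\log t+1)}{t^2\log^2 t}\,dt,
\]
I would split $\theta(t)=t+R(t)$ and evaluate the resulting main-term integral in closed form (the substitution $u=\log t$ reduces it to $\int (1/u+1/u^2)\,du$). Once the tail integral of $R(t)/(t^2\log t)$ is shown to converge, Mertens' constant can be identified with $M=-\log\log 2+1/\log 2+\int_2^\infty R(t)(\log t+1)/(t^2\log^2 t)\,dt$, giving the master formula
\[
\sum_{p\leq x}\frac{1}{p}-\log\log x-M=\frac{R(x)}{x\log x}-\int_x^\infty\frac{R(t)(\log t+1)}{t^2\log^2 t}\,dt
\]
for all three inequalities of the lemma.

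With this identity each bound reduces to inserting the sharpest available explicit estimates on $R(t)$ in the relevant range. For \eqref{up1p1} I would use a Chebyshev bound of the form $|R(t)|\leq C_1 t/\log^k t$ valid for $t>e^{8.9}$ (for instance those of Dusart or Platt--Trudgian), which after integration contributes an explicit $1/\log x$ error with leading constant $1.445\cdot 10^{-2}$. For \eqref{up1p2} the far sharper constant $2.588\cdot 10^{-6}$ requires the strongest Chebyshev-type bounds, available only beyond $10^{12}$ thanks to the explicit verification of RH to large height. For the lower bound \eqref{low1p}, which must hold down to $x=2$, I would combine the analytic estimate on some tail $[x_0,\infty)$ with a direct numerical verification of the inequality on $[2,x_0]$, choosing $x_0$ so that the combined constant $2.964\cdot 10^{-6}$ is admissible throughout.

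Since the lemma is described as a modification of Lemmas 17 and 18 of \cite{BJV22}, in practice my approach would be to reuse their proof scheme, only substituting whichever updated Chebyshev-type input is needed to produce the stated constants. The main obstacle is bookkeeping: one must split the tail integral $\int_x^\infty R(t)(\log t+1)/(t^2\log^2 t)\,dt$ into subranges according to which $\theta$-bound is sharpest there, optimise the split points, and verify that no $\log\log x$ factor leaks into the final $1/\log x$ constant. The small-$x$ numerical check supporting \eqref{low1p} is delicate but routine: one must tabulate the partial sums of $1/p$ against $\log\log x+M$ with enough precision to certify the small gap $2.964\cdot 10^{-6}/\log x$ across the transition interval, and check that the tabulated data indeed meets the analytic bound at $x=x_0$.
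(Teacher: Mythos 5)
Your Abel-summation framework is indeed the engine behind \cite[Lemma 17]{BJV22}, which the paper leans on, so the master identity you set up is the right starting point. But the paper's actual proof is considerably more economical and structured differently from your plan. For \eqref{low1p} the paper does no work at all: it is stated to be identical to the lower bound in \cite[Lemma 17]{BJV22} and is simply cited, not reproved by a fresh split into analytic tail plus numerical check. For \eqref{up1p1} the paper splits into three ranges: a direct numerical evaluation of $\sum_{p<x}1/p$ for $\exp(8.9)\leq x\leq\exp(10)$, then the bound $\sum_{p\leq x}1/p\leq\log\log x+M+\frac{2}{\sqrt{x}\log x}$ from \cite[Lemma 16]{BJV22} for $\exp(10)<x\leq10^{12}$ (observing $2/\sqrt{x}<1.445\cdot10^{-2}$ there), and finally a reduction to \eqref{up1p2} for $x>10^{12}$. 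Your plan to push a bound of the shape $|R(t)|\leq C_1 t/\log^k t$ all the way down to $t=e^{8.9}\approx 7332$ is a genuine gap: explicit Chebyshev-type bounds of that form are not sharp enough so low down (their thresholds of validity are typically much higher), and the constant $1.445\cdot10^{-2}$ in fact originates from the $2/\sqrt{x}$ term at $x=\exp(10)$, a square-root-type bound rather than a $t/\log^k t$ bound. Without the direct computation on $[\exp(8.9),\exp(10)]$ your route will not reach that constant.

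For \eqref{up1p2} your intuition is closer to the paper's: the paper reruns the proof of \cite[Lemma 17]{BJV22} and notes that all but one error term is already bounded in absolute value, so the only new work is the tail integral $\int_x^{10^{19}}(y-\theta(y))(1+\log y)/(y^2\log^2 y)\,dy$ for $10^{12}<x\leq10^{19}$, which is handled by the explicit square-root bound $x-\theta(x)\leq1.95\sqrt{x}$ of B\"uthe and an exact closed-form evaluation via $\li(1/\sqrt{y})$. This is again an RH-verification $\sqrt{x}$-type bound, not a $t/\log^k t$ Chebyshev bound, so you should adjust that part of your plan accordingly. In short: your decomposition and convergence argument are correct in spirit, but you are missing the direct-computation step that makes \eqref{up1p1} go through, and you should make explicit that the constants in both upper bounds are driven by $\sqrt{x}$-type estimates rather than the Dusart/Platt--Trudgian $t/\log^k t$ bounds you name.
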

\begin{proof}
    To begin with, we note that \eqref{low1p} is the same as the lower bound in \cite[Lemma 17]{BJV22}. For \eqref{up1p1} we first used a direct computation for the range $\exp(8.9)\leq x\leq\exp(10)$. Namely, we wrote a short Python script which evaluated each term in the sum $\sum_{p<x}1/p$ successively and compared the result to right-hand side of \eqref{up1p1}. Since $\exp(10)\approx 22000$, this computation was quick and took less than a second on a laptop with a 2.4 GHz processor. Next by \cite[Lemma 16]{BJV22}, for $\exp(10)<x\leq 10^{12}$,
    \begin{equation*}
        \sum_{p\leq x}\frac{1}{p}\leq\log\log x+M_0+\frac{2}{\sqrt{x}\log x} \leq \log \log x + M_0 + 1.348 \cdot 10^{-3}.
    \end{equation*}
    For $x>10^{12}$ it suffices to prove \eqref{up1p2}. By the argument in the proof of \cite[Lemma 17]{BJV22}, we have for $10^{12} < x \leq 10^{19}$,
    \begin{align} \label{eq: bound-1/p}
        \sum_{p\le x}\frac{1}{p}&\leq \log\log x+M_0+\frac{6.9322\cdot 10^{-5}}{\log^2x}+\int_x^{10^{19}}\frac{(y-\theta(y))(1+\log y)}{y^2\log^2y}\mathrm{d}y\notag\\
        &+8.6315\cdot 10^{-7}\left(\frac{\log x}{2\log^2(10^{19})}+\frac{\log x}{\log(10^{19})}\right) \frac{1}{\log x},
    \end{align}
    where
    \begin{equation*}
        \theta(y)=\sum_{p\leq y}\log p
    \end{equation*} 
    is the Chebyshev theta function. We use the bound $x-\theta(x)\leq 1.95\sqrt{x}$ from \cite[Theorem 2]{buthe2018analytic} to obtain
    \begin{align}
        \int_x^{10^{19}}&\frac{(y-\theta(y))(1+\log y)}{y^2\log^2y}\mathrm{d}y\leq\int_x^{10^{19}}\frac{1.95(1+\log y)}{y^{3/2}\log^2y}\mathrm{d}y\notag\\
        &=1.95\left[\frac{1}{2}\text{li}\left(\frac{1}{\sqrt{y}}\right)-\frac{1}{\sqrt{y}\log y}\right]_x^{10^{19}}\notag\\
        &\leq\frac{1}{\log x}\left(\frac{1.95}{\sqrt{x}}-0.975\cdot \text{li}\left(\frac{1}{\sqrt{x}}\right)\log x-2.759\cdot 10^{-11}\cdot\log x\right), \label{eq: bound-int-x-10^19}
    \end{align}
    where 
    \begin{equation*}
        \li(x)=\int_0^x\frac{1}{\log t}\mathrm{d}t
    \end{equation*}
    is the logarithmic integral. We combine \eqref{eq: bound-1/p}, \eqref{eq: bound-int-x-10^19}, and the range $10^{12} < x \leq 10^{19}$ to obtain \eqref{up1p2}. In the case $x > 10^{19}$, we instead have 
    $$\sum_{p\le x}\frac{1}{p}\leq \log\log x+M_0+\frac{6.9322\cdot 10^{-5}}{\log^2x}
    +8.6315\cdot 10^{-7}\left(\frac{1}{2\log^2 x}+\frac{1}{\log x}\right),$$
    implying \eqref{up1p2}.
\end{proof}

\begin{lemma}\label{1pm1lem} Let $u_0\geq 2$ and suppose $z>z_0$. Then for all $u_0<u<z$, we have that there exists an $\epsilon=\epsilon(u_0,z_0)>0$ such that
\begin{equation}\label{mainepseq}
    \prod_{u\leq p<z}\left(1-\frac{1}{p-1}\right)^{-1}<\left(1+\epsilon\right)\frac{\log z}{\log u}.
\end{equation}
For our purposes, we have that
\begin{equation}\label{u30eq}
    \epsilon(30,\exp(8.9))=1.312\cdot 10^{-2}
\end{equation}
and
\begin{equation}\label{u400eq}
    \epsilon(400,10^{12})=5.543\cdot 10^{-4}
\end{equation}
are valid choices of $\epsilon$.
\end{lemma}
\begin{proof}
    We modify the proof of \cite[Lemma 18]{BJV22} but bound some of the terms with more care since we are dealing with much lower values of $u$. We will first prove \eqref{u30eq}, noting that the proof for \eqref{u400eq} is identical with slight changes in the constants obtained. In general, the following argument can be used to obtain \eqref{mainepseq} for any choice of $u_0$ and $z_0$. So to begin with,
    \begin{equation*}
        \prod_{u\le p<z}\left(1-\frac{1}{p-1}\right)^{-1}=\prod_{u\le p<z}\left(\frac{(p-1)^2}{p(p-2)}\right)\prod_{u\le p<z}\left(1-\frac{1}{p}\right)^{-1}.
    \end{equation*}
    Now,
    \begin{align}
        \prod_{u\le p<z}\left(\frac{(p-1)^2}{p(p-2)}\right)&\leq \prod_{p\geq u}\left(\frac{(p-1)^2}{p(p-2)}\right)\notag\\
        &=\frac{\prod_{p>2}\left(\frac{(p-1)^2}{p(p-2)}\right)}{\prod_{2<p<u}\left(\frac{(p-1)^2}{p(p-2)}\right)}\notag\\
        &\le 1.00754,\label{twinprimerat}
    \end{align}
    noting that $u>u_0=30$ and $\prod_{p>2}\left(\frac{(p-1)^2}{p(p-2)}\right)=1.514780\ldots$ is the reciprocal of the twin prime constant. Thus,
    \begin{equation}\label{prodeq1}
        \prod_{u\le p<z}\left(1-\frac{1}{p-1}\right)^{-1}<1.00754\prod_{u\le p<z}\left(1-\frac{1}{p}\right)^{-1}.
    \end{equation}
    Next, we note that
    \begin{equation}\label{prodtoexp}
        \prod_{u\le p<z}\left(1-\frac{1}{p}\right)^{-1}=\exp\left(-\sum_{u\le p<z}\log\left(1-\frac{1}{p}\right)\right).
    \end{equation}
    Now, by Lemma \ref{mertenlem}
    \begin{align}\label{merteneq}
        \sum_{u\le p< z}\frac{1}{p}&=\sum_{p<z}\frac{1}{p}-\sum_{p<u}\frac{1}{p}\notag\\
        &\le\log\log z-\log\log u+1.614\cdot 10^{-3}+\frac{2.964\cdot 10^{-6}}{\log u}\notag\\
        &\le \log\log z-\log\log u + 1.615\cdot 10^{-3}.
    \end{align}
    since $z>\exp(8.9)$ and $u>30$.
    Using \eqref{prodtoexp}, \eqref{merteneq} and that for $0<x\le 1/10$,
    \begin{equation*}
        \log(1-x)\ge-x-0.54x^2,
    \end{equation*}
    we have,
    \begin{equation}\label{prodeq2}
        \prod_{u\le p<z}\left(1-\frac{1}{p}\right)^{-1}\le\frac{\log z}{\log u}\exp(1.615\cdot 10^{-3})\exp\left(0.54\sum_{p\ge u}\frac{1}{p^2}\right)
    \end{equation}
    with
    \begin{equation}\label{prodeq4}
        0.54\sum_{p\ge u}\frac{1}{p^2}\leq 0.54\sum_{p\geq 2}\frac{1}{p^2}-0.54\sum_{p<u}\frac{1}{p^2}< 0.00390788
    \end{equation}
    since $u>30$ and $\sum_{p}\frac{1}{p^2}=0.45224742004106\ldots$ is known to a high degree of accuracy (see e.g.\ \cite{merrifield1882iii}). Using \eqref{prodeq1}, \eqref{prodeq2} and \eqref{prodeq4} then gives \eqref{u30eq}. 
    
    For the second result \eqref{u400eq} the method is exactly the same. However, the constants appearing in \eqref{twinprimerat}, \eqref{prodeq2} and \eqref{prodeq4} are instead replaced by $1.00036$, $7.422\cdot 10^{-7}$ and $0.000193433$ respectively.  
\end{proof}

We now recall \cite[Lemma 38]{BJV22}, giving additional examples that we require.

\begin{lemma}[{\cite[Lemma 38]{BJV22}}]\label{vjlem}
    For $x\ge 285$ and $j=0,\ldots,\ell$, we have
    \begin{align} \label{eq: Vj-UNj}
        V^{(j)}(x) &= \frac{U_N^{(j)}}{\log x} \left[ 1 + 1.45~\theta_1(x) \frac{\log N}{x-1} \left( 1 + \frac{10 \log \log N}{\log N} \right) \right]\notag\\
        &\qquad\qquad\qquad\qquad\qquad\qquad\cdot\left( 1 + \frac{1.002~\theta_2(x)}{x-3} \right) \left( 1 + \frac{\theta_3(x)}{2 \log^2 x} \right),
    \end{align}
    where $|\theta_i(x)|\leq 1$. In particular, for a choice of positive integer $M$ we set $z=N^{1/M}$ and $z\geq z_0$, allowing us to write
    \begin{equation} \label{def: xi}
        \frac{U_N^{(j)}}{\log z}\left(1-\frac{\xi(z_0,M)}{\log^2 N}\right)<V^{(j)}(z)<\frac{U_N^{(j)}}{\log z}\left(1+\frac{\xi(z_0,M)}{\log^2 N}\right)
    \end{equation}
    for some constant $\xi(z_0,M)>0$. For our purposes, we compute $\xi(10^{12},40)\leq 801$ and $\xi(\exp(8.9),18))\leq 1244$.
\end{lemma}

Finally we give a result that follows directly from \cite[Theorem 1.5]{hathi2024sum}. This improves on an earlier result of Dudek \cite{dudek2017sum} that was used in \cite{BJV22}.

\begin{lemma}\label{djshehzadlem}
    Let $p_i$ denote the $i^{\text{th}}$ prime and suppose $X_2\geq 4\cdot 10^{18}$. Then every even integer $2<N<X_2$ can be written as the sum of a prime and a square-free number $\eta>1$ with at most $K$ prime factors, where $K\geq 1$ is the largest integer such that
    \begin{equation*}
        \theta(p_{K+6})-\theta(13)<\log(X_2).
    \end{equation*}
\end{lemma}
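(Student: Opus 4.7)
The plan is to invoke Theorem 1.5 of \cite{hathi2021sum} and then reduce the assertion to a simple size bound on $\eta$. Hathi and the first author establish that every even integer $N$ in the range $2 < N < X_2$ admits a representation $N = p + \eta$, where $p$ is prime and $\eta > 1$ is a square-free integer all of whose prime factors exceed $13$. I would take this representation as given and then bound the number of prime factors of $\eta$ using only the size constraint $\eta < N < X_2$.

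Next, I would observe that, since $\eta$ is square-free with all prime factors at least $p_7 = 17$, if $\eta$ has exactly $k$ prime factors then
\[
\eta \geq p_7 p_8 \cdots p_{k+6},
\]
so taking logarithms yields
\[
\theta(p_{k+6}) - \theta(13) = \sum_{i=7}^{k+6} \log p_i \leq \log \eta < \log X_2.
\]
Hence $k$ cannot exceed the largest integer $K \geq 1$ for which $\theta(p_{K+6}) - \theta(13) < \log X_2$, which is precisely the bound in the statement.

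The only real obstacle is checking that Theorem 1.5 of \cite{hathi2021sum} indeed covers the full range $2 < N < X_2$ without requiring $N$ to exceed a non-trivial lower bound; if the cited theorem leaves out a small number of exceptional even $N$, these would need to be dispatched by a direct computation or by appealing to Dudek's earlier result in \cite{dudek2017sum} for small $N$. Provided the cited theorem is stated in the form described above, no further argument is needed.
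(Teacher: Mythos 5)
Your proposal is correct and takes essentially the same approach as the paper: invoke \cite[Theorem 1.5]{hathi2021sum} to get a representation $N = p + \eta$ with $\eta > 1$ square-free and coprime to $2\cdot3\cdot5\cdot7\cdot11\cdot13$, then use $\eta < N < X_2$ together with the fact that $\eta$'s prime factors are distinct and all $\geq 17$ to bound the number of prime factors. The small-$N$ caveat you flag is real (the Hathi--Johnston theorem requires a lower bound on $N$), and the paper dispatches it exactly as you anticipate via ``direct computation,'' specifically the Goldbach verification of \cite{O_H_P_14} for $2 < N \leq 4\cdot 10^{18}$, so that $\eta$ can be taken to be a prime, using $K \geq 1$.
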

\begin{proof}
    For $2<N\leq 4\cdot 10^{18}$, the result is true since Goldbach's conjecture holds in this range \cite{O_H_P_14}. For $4\cdot 10^{18}<N<X_2$ we then have by \cite[Theorem 1.5]{hathi2024sum} that $N=p+\eta$ where $p$ is a prime and $\eta$ is a square-free number coprime to the first 6 primes 2, 3, 5, 7, 11 and 13. Since $\eta<N\leq X_2$, the number of prime factors of $\eta$ is at most
    \begin{equation*}
        K=\max_m\left\{\prod_{i=1}^mp_{i+6}<X_2\right\}
    \end{equation*}
    and if $\prod_{i=1}^Kp_{i+6}<X_2$ then $\theta(p_{K+6})-\theta(13)=\sum_{i=1}^K\log(p_{i+6})<\log(X_2)$.
\end{proof}
\begin{remark}
    The condition $X_2\geq 4\cdot 10^{18}$ can be weakened to $X_2\geq 40$. However, here we wish to highlight the usefulness of the Goldbach verification \cite{O_H_P_14}.
\end{remark}

\section{The unconditional result}\label{sectuncon}
In this section, we prove Theorem \ref{unconthm}. Namely, that every even integer $N\geq 4$ can be expressed as the sum of a prime and a number with at most $K=395$ prime factors. In what follows we set $\pi_K(N)$ to be the number of ways to write $N$ as the sum of a prime and a number with at most $K$ prime factors. The general idea will be to set $z=N^{1/M}$ for some positive integer $M$ satisfying $5\leq M\leq K+1$. We then have $\pi_{M-1}(N)\geq S(A,P(z))$ so that, if one can prove $S(A,P(z))>0$ for all even $N\geq X_2$, then $\pi_{K}(N)\geq\pi_{M-1}(N)>0$ for all even $N\geq X_2$. Since we will be taking $X_2$ to be quite large, the case when $4\leq N<X_2$ must be treated separately. This will be done using Lemma \ref{djshehzadlem}.

To bound $S(A,P(z))$ from below, we generalise Theorem 44 of \cite{BJV22}. This is done by parameterising\footnote{To avoid confusion with notation, we remark that $\alpha_1$ means something different in \cite{BJV22}. Namely, it corresponds to our variable $\alpha$.} $\alpha_1$, $\alpha_2$, $Y_0$ and $M$ and making some other small changes. Similar to \cite{BJV22}, the method will be to pick suitable values of the parameters $\alpha_1$, $\alpha_2$, $Y_0$, $\delta$, $\alpha$, $M$ and $X_2$ so that $S(A,P(z))>0$ for all even $N\geq X_2$. 

We first provide a modification of \cite[Lemma 43]{BJV22}, which will be used in the proof of Theorem \ref{theo:S>1} below.

\begin{figure}
\centering
\begin{tikzpicture}
\node[draw] at (0,0) (L43) {Lemma \ref{Lemma42-BJV}};
\node[draw] at (-5,-1) (L22) {\cite[Lemma 22]{BJV22}};
\node[draw] at (-2,-3) (L31) {\cite[Lemma 31]{BJV22}};
\node at (-2,-3.6) (L31-1) {\small{with $p(X_2)$ as in Sec. \ref{subsec: list-of-def}}};
\node at (-2,-4.1) (L31-2) {\small{and $\log \log x_1(X_2) \geq 10.4$}};
\node at (-2,-4.6) (L31-3) {\small{\quad\quad replaced by $\ \geq Y_0$}};
\node[draw] at (2,-3) (L33) {\cite[Lemma 33]{BJV22}};
\node at (2,-3.6) (L33-1) {\small{$\log \log x_1(X_2) \geq 10.4$}};
\node at (2,-4.1) (L33-2) {\small{replaced by $\ \geq Y_0$}};
\node[draw] at (5,-1) (L34) {\cite[Lemma 34]{BJV22}};
\node at (5,-1.6) (L34-1) {\small{with $p^*(X_2)$ as in Sec. \ref{subsec: list-of-def}}};
\node at (5,-2.1) (L34-2) {\small{and $\log \log x_1(X_2) \geq 10.4$}};
\node at (5,-2.6) (L34-3) {\small{\quad\quad replaced by $\ \geq Y_0$}};
\node[draw] at (-2,-6) (L30) {\cite[Lemma 30]{BJV22}};
\node at (-2,-6.6) (L30-1) {\small{with $H := \frac{\sqrt{x_1}}{\log^{\alpha_1} x_1}$ and}};
\node at (-2,-7.1) (L30-2) {\small{$\log \log x_1(X_2) \geq 10.4$}};
\node at (-2,-7.6) (L30-3) {\small{replaced by $\ \geq Y_0$}};
\node[draw] at (2,-6) (L27) {\cite[Lemma 27]{BJV22}};
\node at (2,-6.6) (L27-1) {\small{replaced by}};
\node at (2,-7.1) (L27-2) {\small{Theorem \ref{PNTAPthm}}};
\node[draw] at (-5,-9.5) (L23) {\cite[Lemma 23]{BJV22}};
\node at (-5,-10.1) (L23-1) {\small{requires}};
\node at (-5,-10.6) (L23-2) {\small{$45 \leq H$}};
\node[draw] at (-1.7,-9.5) (L24) {\cite[Lemma 24]{BJV22}};
\node at (-1.7,-10.1) (L24-1) {\small{requires}};
\node at (-1.7,-10.6) (L24-2) {\small{$10^9 \leq Q_{\alpha_1}(x_1) \leq H$}};
\node[draw] at (1.6,-9.5) (Hincr) {$H$ increases in $N$};
\node[draw] at (4.9,-9.5) (L28) {\cite[Lemma 28]{BJV22}};
\node at (4.9,-10.1) (L28-1) {\small{replaced by}};
\node at (4.9,-10.6) (L28-2) {\small{Theorem \ref{PNTAPthm}}};

\draw [->] (L22.south) to [out=-90,in=180] (L30.west);
\draw [->] (L22.north) to [out=90,in=180] (L43.west);
\draw [->] (L31.north) to [out=90,in=-90] (-0.1, -0.3);
\draw [->] (L33.north) to [out=90,in=-90] (0.1, -0.3);
\draw [->] (L34.north) to [out=90,in=0] (L43.east);
\draw [->] (L30.north) to [out=90,in=-90] (L31-3.south);
\draw [->] (L27.north) to [out=90,in=-90] (L33-2.south);
\draw [->] (L23.north) to [out=90,in=180] (L31.west);
\draw [->] (L24.north) to [out=90,in=-90] (-2,-7.9);
\draw [->] (Hincr.north) to [out=150,in=-60] (-1.5,-7.9);
\draw [->] (L28.north) to [out=150,in=-30] (-1,-7.9);
\end{tikzpicture}
\caption{The scheme of the proof of Lemma \ref{Lemma42-BJV}. Each node indicates which lemma we need to use from \cite{BJV22} and how these lemmas must be updated for our purposes. An arrow $A\to B$ between two nodes indicates that the lemma $A$ is required to prove the lemma $B$.}
\label{pic: Lemma 4.1}
\end{figure}

\begin{lemma}\label{Lemma42-BJV}
    Let $N$ be a positive even integer with $N \geq X_2$ and $\log\log x_1(X_2)~\ge~Y_0$. Let $H:=H(N)=\frac{\sqrt{x_1}}{\log^{\alpha_1}x_1}$ and suppose $K_{\delta}$, $c_4(X_2)$, $c_4^*(X_2)$, $P^{(j)}$, and $Q_{\alpha_1}(x_1)$ are as in Section \ref{sectnot}. If $10^9 \leq Q_{\alpha_1}(x_1) \leq H$ and $H = H(N)$ increases in $N$, then
    \begin{equation}\label{unsharpeq1}
        \sum_{\substack{d<H\\d\mid P(z)}}|r(d)|<\frac{c_4(X_2)N}{\log^3 N}
    \end{equation}
    and if $k_1\ge K_{\delta}(x_1)$
    \begin{equation}\label{sharpeq}
         \sideset{}{^\sharp}\sum_{\substack{d<H/m_j}}|r_{m_j}(d)|<\frac{c_4^*(X_2)N}{\log^3N},
    \end{equation}
    for all $1 \leq j \leq \ell$, where
    and the $\sharp$ means that the sum is over $d\mid P^{(j+1)}(z)$ if $j<\ell$ and $d\mid P^{(\ell)}(z)$ if $j=\ell$.
\end{lemma}
\begin{proof}
The proof is the same as for \cite[Lemma 43]{BJV22} with a few modifications in the required conditions and subsidary lemmas. We provide a scheme, Figure \ref{pic: Lemma 4.1}, to illustrate how the lemmas from \cite{BJV22} are combined for the proof \cite[Lemma 43]{BJV22}. We also state which conditions in each of these lemmas should be updated in order to get Lemma \ref{Lemma42-BJV} instead of \cite[Lemma 43]{BJV22}. In particular, most of the arguments leading to the proof of \cite[Lemma 43]{BJV22} did not actually require $\log\log x_1(X_2)\geq 10.4$ and can readily be replaced by $\log\log x_1(X_2)\geq Y_0$. 
\end{proof}

\begin{theorem}
\label{theo:S>1}
Let $K_{\delta}$ and the functions $f$, $F$ be defined in \eqref{eq: K-delta} and \eqref{def-f-F} respectively.
Let $\alpha_1$, $\alpha_2$, $Y_0$ and $C(\alpha_1,\alpha_2,Y_0)$ be a valid choice of parameters in Theorem \ref{PNTAPthm}, $M\geq 5$, $u_0\geq 2$, $\delta\in(0,2)$, $\alpha\in(0,1/2)$ and $X_2>0$. Also let $N\geq X_2$ be even, $z=N^{1/M}$, $z_0=X_2^{1/M}$, $\epsilon=\epsilon(u_0,z_0)$ be as in Lemma \ref{1pm1lem}, and $\xi(z_0,M)$ be as in Lemma \ref{vjlem}. Assume $\log\log x_1(X_2)\ge Y_0$ and the conditions
\begin{align}\label{newconditions}
   &\frac{\sqrt{x_1}}{\log^{\alpha_1}x_1}\geq \log^{\alpha_1}N\geq 10^9,\quad 1-\frac{\xi(z_0,M)}{\log^2N}\geq 0,\quad X_2\geq 4\cdot 10^{18},\quad\epsilon\leq \frac{1}{74}\\
   &\frac{N^{\alpha}}{\log^{\alpha_1} x_1(N) \log^{2.5} N}\ge \exp\left(u_0\left(1+\frac{9\cdot 10^{-7}}{\log u_0} \right)\right),~
    \frac{N^{\frac{1}{2}-\alpha}}{\log^{2\alpha_1} N}\ge z^2,\ K_{\delta}(x_1)\geq 3022.\notag
\end{align}
are satisfied.\\
(a) If $k_1<K_{\delta}(x_1)$, we have
\begin{align*}
    S(A,P(z))&>M\frac{|A| U_N}{\log N}\left(1-\frac{\xi(z_0,M)}{\log^2 N}\right)\\
    &\cdot\Bigg(f\left(M\left(\frac{1}{2}-\alpha\right)\right)-C_2(\epsilon)\epsilon e^2h\left(M\left(\frac{1}{2}-\alpha\right)\right)\\
    &-\frac{1}{M}\left(1-\frac{\xi(z_0,M)}{\log^2 N}\right)^{-1}\left(2e^{-\gamma}\prod_{p>2}\left(1-\frac{1}{(p-1)^2}\right)\right)^{-1}\frac{c_4(X_2)}{\log N}\Bigg).
\end{align*}
(b) On the other hand, if $k_1\ge K_{\delta}(x_1)$, we have
\begin{align*}
    &S(A,P(z))>\\
    &\ M\frac{|A| U_N}{\log N}\left(1+\frac{\xi(z_0,M)}{\log^2N}\right)\Bigg\{f(c_{\alpha,X_2,K})-\epsilon_1(X_2,\delta)(1-f(c_{\alpha,X_2,K})) \\
    &\ -(1+\epsilon_1(X_2,\delta))\epsilon C_2(\epsilon)e^2h(c_{\alpha,X_2,K}) \\
    &\ -\left(3\epsilon_1(X_2,\delta)+a(X_2)\right)\cdot(\overline{m}_{\alpha,X_2,K}+\epsilon C_1(\epsilon)e^2h(c_{\alpha,X_2,K}))-a(X_2)-\frac{2\xi(z_0,M)}{\log^2N}\\
    &\ -\frac{1}{M}\left(1+\frac{\xi(z_0,M)}{\log^2 N}\right)^{-1}\left(2e^{-\gamma}\prod_{p>2}\left(1-\frac{1}{(p-1)^2}\right)\right)^{-1}\frac{c_4^*(X_2)}{\log N}\frac{1.3841\log(\log^{\alpha_1}x_1(N))}{\log\log(\log^{\alpha_1}x_1(N))}\Bigg\},
\end{align*}
where $C_1(\epsilon)$ and $C_2(\epsilon)$ are the values in \cite[Table 1]{BJV22}, $\epsilon_1(X_2,\delta)=\frac{1}{\overline{p}-2}$ with $\overline{p}$ the largest prime such that
\begin{equation*}
    K_{\delta}(x_1(X_2))\geq\prod_{2<p\leq \overline{p}}p
\end{equation*}
and all other notation is as in Section \ref{sectnot}.
\end{theorem}
\begin{remark}
    The restriction $M\geq 5$ is so that the condition $\frac{N^{1/2-\alpha}}{\log^{2\alpha_1}N}\geq z^2$ can be satisfied. This also means that our approach works for at best $K=4$ prime factors.
\end{remark}
\begin{proof}[Proof of Theorem \ref{theo:S>1}]
The proof is very similar to that of \cite[Theorem 44]{BJV22}, except with more choices of parameters and corresponding changes to the conditions. As a result we will only outline the proof of the theorem. However, let us first comment on these changes before we provide such an outline. The first of the seven initial conditions in \eqref{newconditions} is required for applying Lemma \ref{Lemma42-BJV}, and the second is required for applying Lemma \ref{vjlem}; these conditions were true for $Y_0 = 10.4$, $M = 8$, ${z_0 = \exp(20)}$, and $\xi(\exp(20),8) = 32.02$ in \cite{BJV22} but are not necessarily satisfied in our more generalised setting. Note that we removed the condition $8\alpha_1+\frac{160\log \log N}{\log N}<1$, assumed in \cite[Theorem 44]{BJV22}, as this was only required to give an exact expression for $f(s)$ and ensure that the lower bound for $S(A,P(z))$ was asymptotically large enough to prove Chen's theorem.

Let us now outline the proof of the theorem in case (a). We first note that the parameter $\alpha$ from the statement of Theorem \ref{theo:S>1} coincides with the parameter $\alpha_1$ from \cite[Theorem 44]{BJV22}. 

Let
\begin{equation} \label{def: D-s-Q}
    D=N^{\frac{1}{2}-\alpha},\ s=\frac{\log D}{\log z}=M\left(\frac{1}{2}-\alpha\right)\quad\text{and}\quad Q=\prod_{\substack{p\le u_0\\p\nmid N}}p.
\end{equation}
Here, $D \geq z^2$ by the fifth of the seven conditions \eqref{newconditions}, and thus by \cite[Theorem 6]{BJV22} and \eqref{def: xi}
\begin{equation} \label{eq: S>-uncond-case-a}
    S(A,P(z))>M\frac{|A| U_N}{\log N}\left(1-\frac{\xi(z_0,M)}{\log^2 N}\right)
    \cdot\left(f\left(s\right)-C_2(\epsilon)\epsilon e^2h\left(s\right)\right)- \sum_{\substack{d\mid P(z)\\ d<QD}}|r(d)|.
\end{equation}

Next, the fifth condition from \eqref{newconditions} and \cite[Lemma 25]{BJV22} imply $QD \leq H := \frac{\sqrt{x_1}}{\log^{\alpha_1} x_1}$, and thus we can apply Lemma \ref{Lemma42-BJV} to bound the second term on the right-hand side of \eqref{eq: S>-uncond-case-a}. Following the proof of \cite[Theorem 44, case (a)]{BJV22} \textit{mutatis mutandis} then allows us to complete the proof of case (a).

For case (b), the proof also follows from the same method as that of \cite[Theorem 44, case (b)]{BJV22}. We provide a scheme, Figure \ref{pic: Theorem 4.2}, which shows the changes one needs to make to the subsidiary lemmas in \cite{BJV22} in order to prove our more general result. We note that \cite[Lemma 42]{BJV22} requires the condition $\log\log(x_1(X_2)) \geq 10.4$ to bound $|A|$ from above (see \cite[(125)]{BJV22}). We don't require this upper bound for $|A|$ to prove Theorem \ref{theo:S>1} but only the lower bound, which follows from Lemma \ref{Alem}.

\begin{figure}
\centering
\begin{tikzpicture}
\node[draw] at (0,0) (T44) {Theorem \ref{theo:S>1}, case (b)};
\node[draw] at (-5,-1) (L38) {\cite[Lemma 38, (114)]{BJV22}};
\node at (-5,-1.6) (L22-1) {\small{replaced by}};
\node at (-5,-2.1) (L22-2) {\small{Lemma \ref{vjlem} eq. \eqref{def: xi}}};
\node[draw] at (-5,-3.4) (L39) {\cite[Lemma 39]{BJV22}};
\node at (-5,-4) (L39-1) {\small{with $10$ replaced by $\alpha_1$}};
\node[draw] at (-5,-5.8) (L42) {\cite[Lemma 42]{BJV22}};
\node at (-5,-6.4) (L42-1) {\small{with $c_2(X_2)$ and}};
\node at (-5,-6.9) (L42-2) {\small{$c_3(X_2)$ as in Sec. \ref{subsec: list-of-def}}};

\node[draw] at (0,-1.5) (L41) {\cite[Lemma 41]{BJV22}};
\node[draw] at (0,-3) (L45) {\cite[Lemma 45]{BJV22}};
\node[draw] at (0,-4.5) (L46) {\cite[Lemma 46]{BJV22}};
\node at (0,-5.1) (L46-1) {\small{requires $\frac{N^{1/2 - \alpha_1}}{\log^{2\alpha_1}x_1} \geq z^2$}};

\node[draw] at (5,-1) (L49) {\cite[Lemmas 48-49]{BJV22}};
\node at (5,-1.6) (L49-1) {\small{with $a(X_2)$}};
\node at (5,-2.1) (L49-2) {\small{as in Sec. \ref{subsec: list-of-def}}};
\node[draw] at (5,-3.4) (L47) {\cite[Lemma 47]{BJV22}};
\node[draw] at (5,-5.8) (L43) {Lemma \ref{Lemma42-BJV}};

\draw [->] (L38.north) to [out=90,in=180] (T44.west);
\draw [->] (L49.north) to [out=90,in=0] (T44.east);
\draw [->] (L41.north) to [out=90,in=-90] (T44.south);
\draw [->] (L39.east) to [out=0,in=-90] (-1.6, -0.3);
\draw [->] (L47.west) to [out=180,in=-90] (1.6, -0.3);
\draw [->] (L42.east) to [out=0,in=-90] (-1.4, -0.3);
\draw [->] (L43.north) to [out=90,in=-90] (L47.south);
\draw [->] (L45.north east) to [out=45,in=-80] (1.1, -0.3);
\draw [->] (L46.north east) to [out=45,in=-85] (1.4, -0.3);
\end{tikzpicture}
\caption{The scheme of the proof of Theorem \ref{theo:S>1} in case (b).}
\label{pic: Theorem 4.2}
\end{figure}
Finally, we remark that the condition $1-\xi(z_0,M)/\log^2N\geq 0$ is required to prevent any sign problems when applying Lemma \ref{vjlem}, $\epsilon\leq 1/74$ is required to apply \cite[Theorem 6]{BJV22}, and the condition $X_2\geq 4\cdot 10^{18}$ is chosen as for $N\leq 4\cdot 10^{18}$ we always have $\pi_{M-1}(N)>0$ by \cite{O_H_P_14}. Certainly, some these conditions can be weakened if desired, but they are easily satisfied in all the scenarios we consider.
\end{proof}


\begin{proof}[Proof of Theorem \ref{unconthm}]
    From \cite[Table 6]{Bordignon_21} we have that
    \begin{align}
        (Y_0,\alpha_1,\alpha_2,C)&=(7.9, 7, 2, 3.98)\label{param2}
    \end{align} 
    are valid choices of parameters. 
    
    We choose suitable values for $u_0$, $\delta$, $\alpha$, $M$ and $X_2$ to use in Theorem \ref{theo:S>1}. Through a process of trial and error, we found that $u_0=400$, $\delta=1.3$, $\alpha=0.25$ and $M=40$ gave $S(A,P(z))>0$ for all even $N\geq X_2=\exp(\exp(7.9))$. Other values of $\delta$, $\alpha$ and $M$ that we tested only worked for equal or larger values of $X_2$. One could optimise these parameters to more decimal places if desired. However, we found that the main bottleneck to further improvement was the values of $Y_0$, $\alpha_1$, $\alpha_2$ and $C$ that we obtained from \cite{Bordignon_21}.
    
    Note that with these choices of parameters we have $\epsilon=5.543\cdot 10^{-4}$ (Lemma \ref{1pm1lem}), $C_1(\epsilon)=113$, $C_2(\epsilon)=114$, $\xi(z_0,M)\leq 801$ (Lemma \ref{vjlem}), $c_{\alpha,X_2,K}\geq 8.23$, $1-f(c_{\alpha,X_2,K})\leq 1.05\cdot 10^{-4}$ (from \eqref{fbound}), $1-f(M(1/2-\alpha))\leq 1.05\cdot 10^{-4}$, ${h(M(1/2-\alpha))\leq 1.4\cdot 10^{-5}}$, $h(c_{\alpha_{X_2}})\leq 10^{-5}$ and $\epsilon_1(X_2,\delta)=1/11$. More explicitly, Theorem \ref{theo:S>1} gives
    \begin{align*}
        S(A,P(z))&>\frac{38U_NN}{\log^2 N},\quad k_1< K_{\delta}(x_1),\\
        S(A,P(z))&>\frac{5 U_NN}{\log^2 N},\quad k_1\geq K_{\delta}(x_1)
    \end{align*} 
    for all even $N\geq X_2$ with $z=N^{1/40}$. Here, we have used that $|A|>N/\log N$ (Lemma \ref{Alem}) after first verifying that $S(A,P(z))>0$. This tells us that every even integer $N\geq X_2$ can be written as the sum of a prime and a number with at most $M-1=39$ prime factors. For the range $2<N<X_2$ we then apply Lemma \ref{djshehzadlem} and obtain the final value $K=395$.
\end{proof}
\begin{remark}
    Although it may seem that taking $M$ larger than 40 would lead to a better result, this is not necessarily the case. In particular, as $M$ gets larger, so does $\xi(z_0,M)$ to the point where it negatively affects the second condition in \eqref{newconditions} and the bounds on $S(A,P(z))$. Moreover, as $M$ gets large, $z=N^{1/M}$ decreases and worse bounds must be used in Lemma \ref{1pm1lem}.
\end{remark}

\section{The conditional result}\label{sectcon}
In this section we prove Theorem \ref{conthm}. As assuming GRH allows for many improvements to the unconditional result, this section is quite large and has been split into three parts. To begin with, we will use some recent results of Greni\'e and Molteni \cite{ExplChebotarev2019} to obtain conditional bounds for the error terms $\left|\pi(x;q,a)-\frac{\li(x)-\li(2)}{\varphi(q)}\right|$ and $\left|\theta(x;q,a)-\frac{x}{\varphi(q)}\right|$ appearing in the prime number theorem for arithmetic progressions. Next, we will extend Lemma \ref{djshehzadlem} under assumption of GRH. Finally, we will prove a conditional lower bound on $S(A,P(z))$ and use this to prove Theorem \ref{conthm}.

We note that in Sections \ref{condboundssect} and \ref{sapsubsect} there are some similarities with recent work due to Bordignon and the second author \cite{bordignon2024explicit}, which was written concurrently with this paper. However, we have still included all the details here to make this paper self-contained.

\subsection{Conditional bounds on $\pi(x;q,a)$ and $\theta(x;q,a)$}\label{condboundssect}
First we give a bound on $\left|\pi(x;q,a)-\frac{\li(x)}{\varphi(q)}\right|$ which will later be used in Section \ref{sapsubsect} as part of the lower bound on $S(A,P(z))$.
\begin{lemma}\label{epilem}
    Let $x\geq X_2\geq 4\cdot 10^{18}$, and $q$ and $a$ be integers such that $3\leq q\leq \sqrt{x}$ and $(a,q)=1$. Then, assuming GRH,
    \begin{equation*}
        \left|\pi(x;q,a)-\frac{\li(x)}{\varphi(q)}\right|\leq c_{\pi}(X_2)\sqrt{x}\log x,
    \end{equation*}
    where 
    \begin{align*}
        c_{\pi}(X_2)&=\frac{3}{8\pi}+\frac{6 + 1/\pi}{4\log X_2}+\frac{6}{\log^2 X_2} + \frac{\li(2)}{\sqrt{X_2} \log X_2} \leq 0.16.
    \end{align*}
\end{lemma}
\begin{proof}
We apply \cite[Corollary 1]{ExplChebotarev2019} for $K = \mathbb{Q}$ and $L = \mathbb{Q}\left[e^{2\pi i/q}\right]$ and the bounds on $x$ and $q$ to get
\begin{equation*}
    \left|\pi(x;q,a)-\frac{\li(x)-\li(2)}{\varphi(q)}\right|\leq \left( \frac{3}{8\pi}+\frac{6 + 1/\pi}{4\log X_2}+\frac{6}{\log^2 X_2} \right) \sqrt{x}\log x.
\end{equation*}
We conclude by using the triangle inequality and $\varphi(q) \geq 1$.
\end{proof}

We now provide a similar style result for $\left|\theta(x;q,a)-\frac{x}{\varphi(q)}\right|$ which will be useful in Section \ref{djshehzadsect}.

\begin{lemma}\label{cthetalem}
     Let $x\geq X_3\geq 4\cdot 10^{18}$, and $q$ and $a$ be integers such that $1\leq q\leq x$ and $(a,q)=1$. Then, assuming GRH,
    \begin{equation*}
        \left|\theta(x;q,a)-\frac{x}{\varphi(q)}\right|< c_{\theta}(X_3)\sqrt{x}\log^2x,
    \end{equation*}
    where 
    \begin{align}\label{cthetaeq}
        c_{\theta}(X_3)&=\frac{5}{8\pi}+\frac{2}{\log X_3}+\frac{(3 + 1.93378 \cdot 10^{-8})}{\log^2X_3} + \frac{1.04320}{X_3^{1/6}\log^2 X_3}\\
        &\leq 0.25.\notag
    \end{align}
\end{lemma}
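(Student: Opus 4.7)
The plan is to mirror the proof of Lemma \ref{epilem} but applied to the Chebyshev $\theta$ function rather than the counting function $\pi$, which lets us drop the partial summation step that produced the extra factor of $\log x$ there. Concretely, I would begin from the explicit conditional bound for $\psi(x;q,a) - x/\varphi(q)$ given in \cite{ErnPal2022}: under GRH, for $1 \le q \le x$ and $(a,q)=1$, that paper provides an estimate of the shape
\begin{equation*}
    \left|\psi(x;q,a)-\frac{x}{\varphi(q)}\right| \le \left(\frac{1}{16\pi}+\frac{1}{6\pi}+\text{lower-order terms}\right)\sqrt{x}\log^2 x,
\end{equation*}
where the lower-order terms decay in $x$ and $q$. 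Comparing the explicit coefficients in the statement of our lemma with those in Lemma \ref{epilem}, we see the same leading constants $\frac{1}{16\pi}+\frac{1}{6\pi}$ appear, together with new lower-order terms of the exact shape produced by such a bound once the range $1\le q \le x$ is plugged in.

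Next, I would pass from $\psi$ to $\theta$ using the standard identity
\begin{equation*}
    \psi(x;q,a) - \theta(x;q,a) = \sum_{k\ge 2}\theta(x^{1/k};q,a) \le \theta(\sqrt{x}) + \theta(x^{1/3}) + \sum_{k\ge 4}\theta(x^{1/k}),
\end{equation*}
the last inequality holding because restricting to a single residue class only decreases each summand. Using an explicit Chebyshev bound of the form $\theta(y) \le (1+\varepsilon)y$ valid for $y \ge \sqrt{X_3}$, together with a crude uniform tail bound for $k\ge 3$, contributes to $c_\theta(X_3)$ a term $(1+1.93378\cdot 10^{-8})/\log^2 X_3$ (from the $\theta(\sqrt{x})$ piece, after dividing by $\sqrt{x}\log^2 x$) and a term $1.04320/(X_3^{1/6}\log^2 X_3)$ (from the $\theta(x^{1/3})$ piece, since $x^{1/3}/\sqrt{x}=x^{-1/6}$). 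These match precisely the two anomalous terms in the definition \eqref{cthetaeq} of $c_\theta(X_3)$, confirming that this is indeed the route.

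Finally, I would collect terms. Since we only need the inequality for $x \ge X_3$, each term of the form $C/\log^k x$ (with $k>0$) is bounded above by $C/\log^k X_3$, and each term of the form $C x^{-r}$ is bounded by $C X_3^{-r}$; all dependence on $x$ disappears into $c_\theta(X_3)$. The numerical estimate $c_\theta(X_3)\le 0.83$ for $X_3 \ge 4\cdot 10^{18}$ is then a direct evaluation using $\log X_3 \ge \log(4\cdot 10^{18}) > 42.8$, noting that every term apart from the absolute constants $\frac{1}{16\pi}+\frac{1}{6\pi}+0.184$ is comfortably controlled at this size.

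The main obstacle will be bookkeeping rather than conceptual: one must carefully track how the explicit constants in the $\psi$-version of \cite{ErnPal2022} (which depend on the allowed range of $q$, typically through terms like $\log q/\log x$ and $(\log q)^2$) are bounded once $q\le x$ is used, and then make sure the extra contribution from $\psi-\theta$ combines cleanly with them to give exactly the expression \eqref{cthetaeq}. I would double-check the numerical coefficient $1.04320$ against an explicit Chebyshev bound for $\theta(y)-y$ uniform in $y\ge X_3^{1/3}$ to ensure the passage is tight enough to preserve the stated constant.
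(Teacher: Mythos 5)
Your proposal follows the same two-step structure as the paper: start from the explicit conditional bound for $\psi(x;q,a)-x/\varphi(q)$ in \cite{ErnPal2022}, then convert to $\theta$ by bounding $\psi(x;q,a)-\theta(x;q,a)\le\psi(x)-\theta(x)$, and finally absorb all $x$-dependence into $c_\theta(X_3)$ by monotonicity. You correctly identify that the last two summands in \eqref{cthetaeq} come from the $\psi-\theta$ difference (a $\sqrt{x}$ piece and an $x^{1/3}$ piece divided through by $\sqrt{x}\log^2 x$). The one substantive difference is that the paper does not re-derive these via $\sum_{k\ge 2}\theta(x^{1/k})$; it simply cites \cite[Corollary 5.1]{BKLNW_21}, which already gives $\psi(x)-\theta(x)\le(1+1.93378\cdot 10^{-8})\sqrt{x}+1.04320\,x^{1/3}$ for $x\ge 4\cdot 10^{18}$. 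Your route would have to reproduce exactly these constants, which is doable but redundant.

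There is one small but genuine gap: you apply the $\psi$-bound from \cite{ErnPal2022} for all $1\le q\le x$, but that theorem (like Lemma \ref{epilem} in the paper, which requires $q\ge 3$) does not cover $q=1$ and $q=2$. The paper handles $q=1$ separately by Schoenfeld's \cite[Theorem 10]{Schoenfeld_76}, and $q=2$ by observing that all primes except $2$ are odd so the $q=2$ case reduces to the $q=1$ case up to a negligible term, and only then specialises to $q\ge 3$ before invoking \cite[Theorem 3]{ErnPal2022}. Without this step your argument does not cover the full stated range $1\le q\le x$. The rest of the proposal is sound, and the final numerical check $c_\theta(X_3)\le 0.83$ using $\log X_3>42.8$ is exactly what is needed.
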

\begin{proof}
    
    We first obtain bounds for $\left|\psi(x;q,a)-\frac{x}{\varphi(q)}\right|$. By $1\leq q\leq x$ and \cite[Theorem 1]{ExplChebotarev2019} applied to $K = \mathbb{Q}$ and $L = \mathbb{Q}\left[e^{2\pi i/q}\right]$, we get
    \begin{align*}
        \left| \psi(x;q,a) - \frac{x}{\varphi(q)} \right| &< c_{\psi}(X_3)\sqrt{x}\log^2 x,
    \end{align*}
    where
    \begin{align*}
        c_{\psi}(X_3) &=\frac{5}{8\pi}+\frac{2}{\log X_3}+\frac{2}{\log^2 X_3}.
    \end{align*}
    Next, by \cite[Corollary 5.1]{BKLNW_21}, we have for all $x \geq X_3\geq 4\cdot 10^{18}$,
    \begin{equation*}
        \psi(x;q,a)-\theta(x;q,a)\leq\psi(x) - \theta(x) \leq (1 + 1.93378 \cdot 10^{-8}) \sqrt{x} + 1.04320 x^{1/3}.
    \end{equation*}
    Hence,
    \begin{align*}
        \left| \theta(x;q,a) - \frac{x}{\varphi(q)} \right| \leq c_{\theta}(X_3) \sqrt{x} \log^2 x,
    \end{align*}
    with $c_{\theta}(X_3)$ as in \eqref{cthetaeq}.
\end{proof}

\subsection{An extension of Lemma \ref{djshehzadlem}}\label{djshehzadsect}
Lemma \ref{djshehzadlem} is based off a result of Hathi and the first author \cite[Theorem 1.5]{hathi2024sum} which gives that any even integer $N\geq 40$ can be expressed as the sum of a prime and a square-free number that is coprime to the primorial $2\cdot 3\cdot 5\cdot 7\cdot 11\cdot 13=30030$. As this result is quite useful in obtaining our final value for $K$, here we provide an extension which is conditional under GRH and involves larger primorials and values of $N$.

We begin with a variation of \cite[Lemma 5.1]{hathi2024sum} and then bound some terms for ease of computation.

\begin{lemma}[{cf. \cite[Lemma 5.1]{hathi2024sum}}]\label{hathilem}
Define $\overline{R}_k(N)$ to be the logarithmically-weighted number of representations of $N$ as $N=p+\eta$ where $p$ is a prime, $\eta$ is a square-free number coprime to $k$ and $\eta\neq 1$; namely
\begin{equation} \label{eq: def-R_k_N}
    \overline{R}_k(N) := 
    \sum_{\substack{p \leq n \\ (n-p, k) = 1 \\ p \ne n-1}} \mu^2(n-p) \log p.
\end{equation}
Now, assume GRH and let $N\geq X_3\geq 4\cdot 10^{18}$ be even. Then for any $C\in(0,1/2)$ and positive $B<\sqrt{N}$,
    \begin{align}\label{eveneq}
        \frac{\overline{R_k}(N)}{N}>2c&\prod_{q\mid k/2}\left(1-\frac{q-1}{q^2-q-1}\right)-\frac{c_{\theta}(X_3)\log^2(N)}{\sqrt{N}}\sum_{\substack{d\mid k/2}}\sum_{e\mid d}\sum_{\substack{a\leq B\sqrt{e/d}\\(a,d)=e}}\mu^2(a)\notag\\
        &-E_C(N)\left(\frac{1+2C}{1-2C}\right)\sum_{d\mid k/2}\sum_{e\mid d}\frac{1}{\varphi\left(d/e\right)}\sum_{\substack{a>B\sqrt{e/d}\\(a,d)=e}}\frac{\mu^2(a)}{\varphi(a^2)}\notag\\
        &-\log(N)\left(\sum_{d\mid k/2}\sum_{e\mid d}\left(N^{-\frac{1}{2}}\left(\frac{1}{e}-\frac{1}{d}\right)+\frac{1}{\sqrt{de}}N^{-C}+N^{-2C}\right)\right)\notag\\
        &-\frac{\log(k)}{N}-\frac{\log(N)}{N}.
    \end{align}
    Here, $c=0.37395\ldots$ is Artin's constant, $\mu$ is the M\"obius function, $c_{\theta}(X_3)$ is as in Lemma \ref{cthetalem}, and $E_C(N)$ is defined by
    \begin{equation*}
        E_C(N)=
        \begin{cases}
            0,&\text{if $N^C\leq B$},\\
            1,&\text{if $N^C>B$}.
        \end{cases}
    \end{equation*}
\end{lemma}
\begin{proof}
    If $N^C> B$, the proof is essentially identical to that of \cite[Lemma 5.1]{hathi2024sum} with two main differences. First, the parameter $B$ replaces the choice of $10^5$ used in \cite{hathi2024sum}. Secondly, we have replaced ``$c_{\theta}(da^2/e)/\log n$" with $\frac{c_{\theta}(X_3) \log^2(N)}{\sqrt{N}}$ as a result of the stronger bounds we have under GRH. Note that there is also a slight notation clash with \cite[Lemma 5.1]{hathi2024sum}. Namely, $N$ and $c_\theta$ mean something different in \cite{hathi2024sum} and we have accounted for this accordingly. 

    The only further difference in the case $N^C\leq B$ is that we can omit the term
    \begin{equation*}
        \left(\frac{1+2C}{1-2C}\right)\sum_{d\mid k/2}\sum_{e\mid d}\frac{1}{\varphi\left(d/e\right)}\sum_{\substack{a>B\sqrt{e/d}\\(a,d)=e}}\frac{\mu^2(a)}{\varphi(a^2)}
    \end{equation*}
    appearing in \eqref{eveneq}. This is because in \cite{hathi2024sum} this term appears when considering the range $B<a\leq N^C$, which is empty when $N^C\leq B$.
\end{proof}

\begin{theorem}\label{hathithm}
    Keep the notation and conditions of Lemma \ref{hathilem}, and let $k$ be the product of the first $L+1$ primes. We then have, for $B\geq\max\{45,8\sqrt{k/2}\}$,
    \begin{align*}
        \frac{\overline{R_k}(N)}{N}>2c&\prod_{q\mid k/2}\left(1-\frac{q-1}{q^2-q-1}\right)-\frac{(4+\sqrt{3})^L\cdot B\cdot c_{\theta}(X_3)}{3^L\sqrt{N}}\log^2(N)\\
        &-E_C(N)\left(\frac{1+2C}{1-2C}\right)\cdot 2^L\cdot G\left(\left\lfloor\frac{B}{\sqrt{k/2}}\right\rfloor\right)\\
        &-\log(N)\left(\frac{7^L}{3^L\sqrt{N}}+\frac{(4+\sqrt{3})^L}{3^L N^C}+\frac{3^L}{N^{2C}}\right)\\
        &-\frac{\log(k)}{N}-\frac{\log(N)}{N},
    \end{align*}
    where 
    \begin{equation*}
        G(x)=e^\gamma\left(\frac{\log \log x}{x}-\li\left(\frac{1}{x}\right)\right)+\frac{3}{x}.
    \end{equation*}
\end{theorem}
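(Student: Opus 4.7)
The plan is to start from Lemma~\ref{hathilem} and consolidate each of its five error terms into the form given in the statement. Since $k$ is the product of the first $L+1$ primes, $k/2$ is squarefree with $L$ odd prime factors $3=p_2<\cdots<p_{L+1}$, so every divisor $d\mid k/2$ and sub-divisor $e\mid d$ corresponds to a nested pair of subsets of these primes. Consequently, each multiplicative double sum $\sum_{d\mid k/2}\sum_{e\mid d}f(d,e)$ factors as a product $\prod_{p\mid k/2}g(p)$ of per-prime contributions, and we then majorize uniformly by the value at $p=3$ since every factor we encounter below is monotonically decreasing in $p$.

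For the three ``polynomial in $N$'' error terms this is immediate: $\sum_{d,e}1=\prod_{p}3=3^L$, $\sum_{d,e}(1/e-1/d)\leq\sum_{d,e}1/e=\prod_{p}(2+1/p)\leq(7/3)^L$, and $\sum_{d,e}1/\sqrt{de}=\prod_{p}(1+1/\sqrt{p}+1/p)\leq((4+\sqrt{3})/3)^L$ (equality at $p=3$). For the finite counting sum $\sum_{d,e}\sum_{a\leq B\sqrt{e/d},\,(a,d)=e}\mu^2(a)$, the substitution $a=eb$ with $(b,d)=1$ rewrites the inner constraint as $b\leq B/\sqrt{de}$ and bounds the count by $\lfloor B/\sqrt{de}\rfloor\leq B/\sqrt{de}$; summing over $(d,e)$ yields $B((4+\sqrt{3})/3)^L$.

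The heart of the proof is the infinite tail sum $\sum_{d,e}\varphi(d/e)^{-1}\sum_{a>B\sqrt{e/d},\,(a,d)=e}\mu^2(a)/\varphi(a^2)$. The same substitution $a=eb$, together with $\varphi(a^2)=a\varphi(a)$ and $\varphi(eb)=\varphi(e)\varphi(b)$ (valid since $(b,e)=1$), recasts the inner sum as $(e\varphi(e))^{-1}\sum_{b>B/\sqrt{de},\,(b,d)=1}\mu^2(b)/(b\varphi(b))$. The per-prime computation
\[
\sum_{d\mid k/2}\sum_{e\mid d}\frac{1}{e\varphi(e)\varphi(d/e)}=\prod_{p\mid k/2}\left(1+\frac{p+1}{p(p-1)}\right)\leq\bigl(5/3\bigr)^L\leq 2^L
\]
pulls out the factor $2^L$, and the residual tail of $\mu^2(b)/(b\varphi(b))$ is controlled by an explicit Mertens-type estimate---derived by partial summation from a bound on $\sum_{b\leq x}\mu^2(b)/\varphi(b)$---producing the closed-form decreasing majorant $G$ of the statement. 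The hypotheses $B\geq\max\{45,8\sqrt{k/2}\}$ ensure $\lfloor B/\sqrt{k/2}\rfloor\geq 8$, which is precisely where the given expression for $G$ is well-behaved and positive. I expect this last step to be the main obstacle: the coprimality $(b,d)=1$ must be exploited so that, after summation over $(d,e)$, the varying thresholds $B/\sqrt{de}$ can be consolidated into the single argument $\lfloor B/\sqrt{k/2}\rfloor$ of $G$, since the naive worst-case bound via $de=(k/2)^2$ would only yield $G(\lfloor 2B/k\rfloor)$---considerably weaker than the claim. Combining all five bounds with the coefficients from Lemma~\ref{hathilem} then produces the stated inequality.
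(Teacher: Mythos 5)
Your treatment of the three polynomial error terms and of the finite counting sum follows the paper almost exactly: the paper likewise notes that every $x \mid k/2$ with $m$ prime factors satisfies $x \geq 3^m$, factors the double sums multiplicatively, and bounds the counting sum by $\sum_{d,e} B/\sqrt{de} = B((4+\sqrt{3})/3)^L$. Where you diverge — and where your acknowledged obstacle is a genuine, unrepaired gap — is the infinite tail sum. Your substitution $a=eb$ converts the inner threshold from $a > B\sqrt{e/d}$ into $b > B/\sqrt{de}$, and since $de$ ranges up to $(k/2)^2$, the only uniform relaxation that lets you pull the tail sum outside is $b > B/(k/2)$; this yields $G(\lfloor 2B/k\rfloor)$, which is far larger than the claimed $G(\lfloor B/\sqrt{k/2}\rfloor)$ because $G$ decays roughly like $(\log\log x)/x^2$, so the discrepancy is of order $k/2$ — vastly more than the factor $(6/5)^L$ that your improved prefactor $(5/3)^L$ (versus the paper's $2^L$) could recover. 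The coprimality condition $(b,d)=1$ does not rescue this, since $d$ can equal $e$ and then imposes no constraint. The paper sidesteps the problem entirely by \emph{not} substituting: keeping $a$ as the summation variable, it observes that $B\sqrt{e/d} \geq B/\sqrt{d} \geq B/\sqrt{k/2}$ (the key being $e \geq 1$, not $e \leq d$), interchanges the order of summation to put $\sum_{a > B/\sqrt{k/2}} \mu^2(a)/\varphi(a^2)$ on the outside, drops $1/\varphi(d/e) \leq 1$, and counts the admissible pairs $(d,e)$ for fixed $a$ by noting that $e = (a,d)$ is determined by $d$, giving at most $\tau(k/2) = 2^L$ pairs. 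The function $G$ then arises from bounding $1/\varphi(a^2)$ via Rosser--Schoenfeld's $n/\varphi(n) < e^\gamma\log\log n + 2.5/\log\log n$ applied at $n=a^2$ (the constant $3$ absorbs $e^\gamma\log 2$ plus the $2.5/\log\log a^2$ term using $\lfloor B/\sqrt{k/2}\rfloor \geq 8$) and comparing to an integral. You would need to replace your substitution step with this rearrangement to obtain the stated bound.
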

\begin{proof}
    We write $k'=k/2$ and bound each of the sums from Lemma \ref{hathilem}. First,
    \begin{align*}
        \sum_{\substack{d\mid k'}}\sum_{e\mid d}\sum_{\substack{a\leq B\sqrt{e/d}\\(a,d)=e}}\mu^2(a)\leq \sum_{\substack{d\mid k'}}\sum_{e\mid d}\sum_{\substack{a\leq B\sqrt{e/d}\\e\mid a}}1\leq \sum_{\substack{d\mid k'}}\sum_{e\mid d}\frac{B}{\sqrt{de}}.
    \end{align*}
    To bound this expression further, we note that $k'$ (and each $d\mid k'$) is square-free and odd. Thus, for any $x\mid k'$ with $m$ prime divisors, we have\footnote{One actually has $x\geq \kappa m^m$ for some computable constant $\kappa$. However, we opted for the simpler bound $x\geq 3^m$ as it greatly simplifies the ensuing algebra whilst making very little difference to our final results.} $x\geq 3^m$. So, writing $\omega(d)$ for the number of unique prime factors of $d$,
    \begin{align}\label{bimonsimp}
        \sum_{d\mid k'}\sum_{e\mid d}\frac{1}{\sqrt{de}}&\leq\sum_{d\mid k'}\frac{1}{\sqrt{d}}\sum_{m=0}^{\omega(d)}\frac{1}{(\sqrt{3})^m}\binom{\omega(d)}{m}\notag\\
        &=\sum_{d\mid k'}\frac{1}{\sqrt{d}}\cdot\left(1+\frac{1}{\sqrt{3}}\right)^{\omega(d)}\notag\\
        &\leq\sum_{m=0}^{L}\frac{1}{(\sqrt{3})^m}\left(1+\frac{1}{\sqrt{3}}\right)^m\binom{L}{m}\notag\\
        &=\left(\frac{4+\sqrt{3}}{3}\right)^L,
    \end{align}
    Next,
    \begin{align*}
        \sum_{d\mid k/2}\sum_{e\mid d}\frac{1}{\varphi\left(d/e\right)}\sum_{\substack{a>B\sqrt{e/d}\\(a,d)=e}}\frac{\mu^2(a)}{\varphi(a^2)}&\leq\sum_{a>B/\sqrt{k'}}\frac{\mu^2(a)}{\varphi(a^2)}\sum_{e\mid (a,k')}\sum_{\substack{(a,d)=e\\d\mid k'}}1\\
        &\leq 2^L\sum_{a>B/\sqrt{k'}}\frac{\mu^2(a)}{\varphi(a^2)},
    \end{align*}
    and, by \cite[Theorem 15]{rosser1962approximate},
    \begin{align*}
        \sum_{a>B/\sqrt{k'}}\frac{\mu^2(a)}{\varphi(a^2)}&\leq\sum_{a>B/\sqrt{k'}}\left(\frac{e^{\gamma}\log\log a^2}{a^2}+\frac{2.5}{a^2\log\log a^2}\right)\\
        &\leq\sum_{a>B/\sqrt{k'}}\left(\frac{e^{\gamma}\log\log a^2}{a^2}+\frac{1.76}{a^2}\right)\quad\text{(since $B/\sqrt{k'}\geq 8$)}\\
        &\leq\int_{\lfloor B/\sqrt{k'}\rfloor}^\infty\left(\frac{e^{\gamma}\log\log x}{x^2}+\frac{3}{x^2}\right)\mathrm{d}x\\
        &=G\left(\left\lfloor\frac{B}{\sqrt{k/2}}\right\rfloor\right).
    \end{align*}
    Finally, we want to bound
    \begin{equation}\label{fourthtermeq}
        \sum_{d\mid k/2}\sum_{e\mid d}\left(N^{-\frac{1}{2}}\left(\frac{1}{e}-\frac{1}{d}\right)+\frac{1}{\sqrt{de}}N^{-C}+N^{-2C}\right).
    \end{equation}
    Each term in this double sum is bounded analagously to the double sum in \eqref{bimonsimp}. Namely,
    \begin{align*}
        \sum_{d\mid k'}\sum_{e\mid d}\left(\frac{1}{e}-\frac{1}{d}\right)\leq\sum_{d\mid k'}\sum_{e\mid d}\frac{1}{e}=\left(\frac{7}{3}\right)^L,
    \end{align*}
    \begin{equation*}
        \sum_{d\mid k'}\sum_{e\mid d}\frac{1}{\sqrt{de}}\leq\left(\frac{4+\sqrt{3}}{3}\right)^L
    \end{equation*}
    and
    \begin{equation*}
        \sum_{d\mid k'}\sum_{e\mid d}1= 3^L.
    \end{equation*}
    As a result, \eqref{fourthtermeq} is bounded above by
    \begin{equation*}
        \frac{7^L}{3^L\sqrt{N}}+\frac{(4+\sqrt{3})^L}{3^L N^C}+\frac{3^L}{N^{2C}}
    \end{equation*}
    as desired.
\end{proof}

\begin{corollary}\label{5060cor}
    Assume GRH and let $(\log X_3,L)=(149,20)$ or $(104,12)$. Then every even integer $N\geq X_3$ can be written as the sum of a prime and a square-free number coprime to the product of the first $L+1$ primes.
\end{corollary}
\begin{proof}
    We begin with the case $(\log X_3,L)=(149,20)$. In Theorem \ref{hathithm}, we set $k$ to be the product of the first $L+1=21$ primes, $\log N\geq \log X_3=\exp(149)$, $C = 0.18$, and $B = 10^{22.2}$ to get $\overline{R_k}(N)/N>0$. Note that this computation needs to be done in two parts. Firstly $\exp(149)\leq \log N\leq \frac{\log B}{C}\approx 284$ in which $E_C(N)=0$, then also for $\log N>\frac{\log B}{C}$ in which $E_C(N)=1$. The case $(\log X_3,L)=(104,12)$ is done in the same way with $C=0.15$ and $B=10^{15}$. All computations were done using Python 3.11.3.
\end{proof}

We finish this section with a generalised version of Lemma \ref{djshehzadlem} for which Corollary \ref{5060cor} can be directly applied to.

\begin{proposition}\label{djshehzadprop}
    Let $p_i$ denote the $i^{\text{th}}$ prime and $X_2\geq 4\cdot 10^{18}$. Suppose every even integer $N\geq X_3$ can be written as the sum of a prime and a square-free number coprime to the product of the first $L+1$ primes. Then every even integer $X_3\leq N<X_2$ can be written as the sum of a prime and a square-free number $\eta$ with at most $K$ prime factors, where $K\geq 1$ is the largest integer such that
    \begin{equation*}
        \theta(p_{K+L+1})-\theta(p_{L+1})<\log(X_2).
    \end{equation*}
\end{proposition}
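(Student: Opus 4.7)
The approach is a direct generalisation of the argument used to prove Lemma \ref{djshehzadlem}. The strategy is to invoke the hypothesis to produce a representation, and then to use a size comparison to bound the number of prime factors of the almost-prime part.

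Concretely, let $N$ be even with $X_3 \leq N < X_2$. By hypothesis, we may write $N = p + \eta$ where $p$ is prime and $\eta$ is a squarefree positive integer coprime to $p_1 p_2 \cdots p_{L+1}$. Because $\eta$ is squarefree and none of its prime divisors lie among the first $L+1$ primes, every prime factor of $\eta$ is at least $p_{L+2}$. Consequently, if $\eta$ has exactly $m$ (necessarily distinct) prime factors, then
\[
    \eta \;\geq\; \prod_{i=1}^{m} p_{L+1+i}.
\]
Combining this with $\eta \leq N < X_2$ gives $\prod_{i=1}^{m} p_{L+1+i} < X_2$, so $m$ is bounded above by the largest integer $K$ satisfying the same inequality, namely
\[
    K \;=\; \max_{m}\left\{\prod_{i=1}^{m} p_{L+1+i} < X_2\right\}.
\]

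Finally I would take logarithms to recast this into the Chebyshev-function form stated in the proposition: the inequality $\prod_{i=1}^{K} p_{L+1+i} < X_2$ is equivalent to $\sum_{i=1}^{K} \log p_{L+1+i} < \log X_2$, and a telescoping identity gives
\[
    \sum_{i=1}^{K} \log p_{L+1+i} \;=\; \sum_{i=L+2}^{K+L+1}\log p_i \;=\; \theta(p_{K+L+1}) - \theta(p_{L+1}),
\]
which is precisely the condition appearing in the statement. There is no real obstacle here; the entire proof is a short bookkeeping argument that transports the hypothesis into the explicit numerical form required for subsequent use (in combination with Corollary \ref{5060cor}). The only item to check carefully is that squarefreeness plus the coprimality condition really do force $\eta \geq \prod_{i=1}^{m} p_{L+1+i}$, which is immediate because the product of the $m$ smallest admissible primes is a lower bound for any squarefree integer with $m$ prime factors all exceeding $p_{L+1}$.
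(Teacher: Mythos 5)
Your proof is correct and is exactly the ``direct generalisation of the proof of Lemma \ref{djshehzadlem}'' that the paper asserts: produce $N=p+\eta$ from the hypothesis, use squarefreeness and coprimality to deduce $\eta\geq\prod_{i=1}^m p_{L+1+i}$ when $\eta$ has $m$ prime factors, compare with $\eta<X_2$, and rewrite the product bound via $\theta$. Nothing is missing.
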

\begin{proof}
    Direct generalisation of the proof of Lemma \ref{djshehzadlem}. 
\end{proof}

\subsection{A conditional lower bound for $S(A,P(z))$}\label{sapsubsect}
We now prove an analogue of Theorem \ref{theo:S>1} assuming GRH. For this we will first need a variant of the Bombieri-Vinogradov theorem (cf. \cite[Lemmas 31 and 33]{BJV22}). 

\begin{lemma}\label{lem: bound E_pi}
    Assume GRH and suppose $B > 0$, $N \geq X_2 \geq 4\cdot 10^{18}$ is even, and $H:= \frac{\sqrt{N}}{\log^{B+1} N}\geq 45$. Then
    \begin{equation*}
        \sum_{\substack{d\le H\\(d,N)=1}}\mu^2(d)\left|\pi(N;d,N)-\frac{\pi(N)}{\varphi(d)}\right|\leq \frac{p_G(X_2)N}{\log^B N},
    \end{equation*}
    where
    \begin{align*}\label{peq}
        p_G(X_2)=0.65\left(c_{\pi}(X_2)+\frac{1}{16\pi}\right) \leq 0.117
    \end{align*}
    with $c_{\pi}(X_2)$ as defined in Lemma \ref{epilem}.
\end{lemma}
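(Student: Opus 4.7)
The plan is to apply the triangle inequality, invoke Lemma~\ref{epilem} together with Schoenfeld's GRH-conditional bound on $|\pi(N)-\li(N)|$, and then count the admissible summation indices. First, since $N$ is even and $(d,N)=1$, every $d$ in the sum must be odd: either $d=1$, in which case the summand vanishes because $\pi(N;1,N)=\pi(N)/\varphi(1)$, or $d\ge 3$, in which case $\varphi(d)\ge 2$ and $d\le H\le\sqrt{N}$, so the hypotheses of Lemma~\ref{epilem} are satisfied.

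Second, for each admissible $d$ the triangle inequality gives
\[
\left|\pi(N;d,N)-\frac{\pi(N)}{\varphi(d)}\right|\le\left|\pi(N;d,N)-\frac{\li(N)}{\varphi(d)}\right|+\frac{|\pi(N)-\li(N)|}{\varphi(d)}.
\]
Lemma~\ref{epilem} bounds the first summand by $c_\pi(X_2)\sqrt{N}\log N$. Schoenfeld's explicit GRH estimate $|\pi(N)-\li(N)|<\sqrt{N}\log N/(8\pi)$, valid for $N\ge 2657$ (see \cite{Schoenfeld_76}), combined with $\varphi(d)\ge 2$, bounds the second summand by $\sqrt{N}\log N/(16\pi)$. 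Hence each admissible summand is at most $(c_\pi(X_2)+1/(16\pi))\sqrt{N}\log N$.

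Third, the number of admissible indices is at most the count of odd integers in $[3,H]$, which is at most $H/2\le 0.65\,H$. Summing and substituting $H=\sqrt{N}/\log^{A+1}N$ then gives
\[
\bigl(c_\pi(X_2)+1/(16\pi)\bigr)\sqrt{N}\log N\cdot 0.65\,H=p_G(X_2)\,\frac{N}{\log^A N},
\]
as required. The main obstacle is purely one of packaging: ensuring that the triangle inequality, the two explicit conditional estimates, and the elementary index count all slot together to give precisely the constant $0.65$ and that every hypothesis of Lemma~\ref{epilem} and of Schoenfeld's bound is met uniformly across the sum; no deeper analytic input is needed beyond these two ingredients.
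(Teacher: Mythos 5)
Your proof is correct and follows essentially the same route as the paper's: triangle inequality to split off $\li(N)/\varphi(d)$, Lemma~\ref{epilem} for the arithmetic-progression piece, Schoenfeld's GRH bound $|\pi(N)-\li(N)|<\sqrt{N}\log N/(8\pi)$ together with $\varphi(d)\ge 2$ for the remaining piece, and then a count of the admissible $d$. The only difference is the final step: the paper invokes \cite[Lemma~21]{BJV22} to bound $\sum_{d\le H,(d,N)=1}\mu^2(d)$ by $0.65H$, whereas you count odd integers in $[3,H]$ directly and obtain the slightly sharper bound $H/2$; since $1/2<0.65$ this still yields the stated constant $p_G(X_2)$, so the argument goes through.
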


\begin{proof}
    First note that we may assume $d\geq 3$ since for $d=1$, we have $|\pi(N;d,N)-\pi(N)/\varphi(d)|=0$, and $d\neq 2$ since $N$ is even. Now, by the triangle inequality
\begin{align*}
    \left|\pi(N;d,N)-\frac{\pi(N)}{\varphi(d)}\right| &\leq \left| \pi(N;d,N) - \frac{\li(N)}{\varphi(d)} \right| + \frac{1}{\varphi(d)} |\li(N) - \pi(N)|.
\end{align*}
We bound the first term using Lemma \ref{epilem}, and by \cite[Corollary 1]{Schoenfeld_76} the second term is bounded above by
\begin{equation*}
    \frac{1}{8 \pi \varphi(d)}\sqrt{N}\log N \leq \frac{1}{16 \pi} \sqrt{N} \log N.
\end{equation*}

Therefore, 
\begin{align*}
    \left|\pi(N;d,N)-\frac{\pi(N)}{\varphi(d)}\right| \leq \left(c_{\pi}(X_2)+\frac{1}{16\pi}\right) \sqrt{N} \log N
\end{align*}
so that, by \cite[Lemma 23]{BJV22}
\begin{align*}
    \sum_{\substack{d\le H\\(d,N)=1}}\mu^2(d)\left|\pi(N;d,N)-\frac{\pi(N)}{\varphi(d)}\right| &\leq 0.65 H\left(c_{\pi}(X_2)+\frac{1}{16\pi}\right) \sqrt{N} \log N\\ 
    &\leq\frac{p_G(X_2)N}{\log^B N},
\end{align*}
as required.
\end{proof}

\begin{lemma}[{cf. \cite[Lemma 43]{BJV22}}]\label{rgrhlem}
Keeping the notation and conditions from Lemma \ref{lem: bound E_pi}, we have
\begin{equation}\label{unsharpeq2}
    \sum_{\substack{d<H\\d\mid P(z)}}|r(d)|<\frac{c_{4,G}(X_2)N}{\log^B N},
\end{equation}
where
\begin{align*}
    c_{4,G}(X_2)&=p_G(X_2)+\frac{0.9}{\sqrt{X_2} \log \log X_2} \leq 0.117.
\end{align*}
\end{lemma}

\begin{proof}
    First we note that, by definition
    \begin{equation*}
        |A|=\pi(N)-\omega(N)\quad\text{and}\quad |A_d|=\pi(N;d,N)-\omega(N;d,N),
    \end{equation*}
    where $\omega(N)$ is the number of distinct prime factors of $N$ and $\omega(N;d,N)$ is the number of distinct prime factors of $N$ that are congruent to $N$ modulo $d$. Hence
    \begin{align*}
        |r(d)|&=\left||A_d|-\frac{|A|}{\varphi(d)}\right|\\
        &=\left|\pi(N;d,N)-\frac{\pi(N)}{\varphi(d)}+\frac{\omega(N)}{\varphi(d)}-\omega(N;d,N)\right|\\
        &\leq \left|\pi(N;d,N)-\frac{\pi(N)}{\varphi(d)}\right|+\omega(N).
    \end{align*}
    Therefore, using Lemma \ref{lem: bound E_pi} and the bounds (\cite[Theorem 11]{robin1983estimation} and \cite[Lemma 23]{BJV22})
    \begin{align*}
        \omega(N)&\leq\frac{1.3841\log N}{\log\log N},\quad n\geq 3\\
        \sum_{d\leq x}\mu^2(d)&\leq 0.65x,\quad x\geq 45,
    \end{align*}
    we have,
    \begin{align*}
        \sum_{\substack{d<H\\d\mid P(z)}}|r(d)|&\leq \frac{p_G(X_2)N}{\log^B N}+0.65H\frac{1.3841\log N}{\log\log N}\\
        &<\frac{N}{\log^B N}\left(p_G(X_2)+\frac{0.9}{\sqrt{X}\log\log X_2}\right)
    \end{align*}
    as required.
\end{proof}

We now give a lower bound for $S(A,P(z))$ assuming GRH.

\begin{theorem}\label{grhsbound}
Assume GRH. Let $X_2>0$, $M\geq 5$, $B\geq 2$, $\alpha > 0$, $u_0\geq 2$, $N \geq X_2$ be even, $z=N^{1/M}$, and $z_0 = X_2^{1/M}$ such that
\begin{equation*}
   \frac{\sqrt{X_2}}{\log^{B+1} X_2}\geq 45,\quad 1-\frac{\xi(z_0,M)}{\log^2N}\geq 0,\quad X_2\geq 4\cdot 10^{18},\quad\epsilon\leq \frac{1}{74}
\end{equation*}
where $\epsilon=\epsilon(u_0,z_0)$ is as in Lemma \ref{1pm1lem}, $\xi(z_0,M)$ is as in Lemma \ref{vjlem}, and
\begin{equation*}
\frac{N^{\alpha}}{\log^{B+1}N}\ge \exp\left(u_0\left(1+\frac{9\cdot 10^{-7}}{\log u_0} \right)\right),~
N^{\frac{1}{2}-\alpha}\ge z^2.
\end{equation*} 
Then,
\begin{align*}
    S(A,P(z))&>M\frac{|A| U_N}{\log^2N}\left(1-\frac{\xi(z_0,M)}{\log^2 N}\right)\\
    &\cdot\Bigg(f\left(M\left(\frac{1}{2}-\alpha\right)\right)-C_1(\epsilon)\epsilon e^2h\left(M\left(\frac{1}{2}-\alpha\right)\right)\\
    &-\frac{1}{M}\left(1-\frac{\xi(z_0,M)}{\log^2 N}\right)^{-1}\left(2e^{-\gamma}\prod_{p>2}\left(1-\frac{1}{(p-1)^2}\right)\right)^{-1}\frac{c_{4,G}(X_2)}{\log^{B-2}N}\Bigg),
\end{align*}
where $C_1(\varepsilon)$ is from \cite[Table 1]{BJV22}, $c_{4,G}(X_2)$ is defined in Lemma \ref{rgrhlem}, and all other notation is as in Section \ref{sectnot}.
\end{theorem}
\begin{proof}
    We argue similarly to the case $k_1<K_{\delta}(x_1)$ in the proof of \cite[Theorem 44]{BJV22}. So, set the parameters as in \eqref{def: D-s-Q}.
    Since $D\ge z^2$, we have by \cite[Theorem 6]{BJV22}
    \begin{equation}\label{smallsieveeq}
        S(A,P(z))>\frac{M|A|U_N}{\log N}\left(1-\frac{\xi(z_0,M)}{\log^2N}\right)(f(s)-C_1(\epsilon)\epsilon e^2h(s))-\sum_{\substack{d\mid P(z)\\ d<QD}}|r(d)|.
    \end{equation}
    We now remark that the condition 
    \begin{equation*}
        \frac{N^{\alpha}}{\log^{B+1}N}\ge \exp\left(u_0\left(1+\frac{9\cdot 10^{-7}}{\log u_0}\right)\right)
    \end{equation*}
    implies that
    \begin{equation}\label{qeq}
        Q\le \frac{N^{\alpha}}{\log^{B+1} N}
    \end{equation}
    by \cite[Lemma 25]{BJV22}. As a result, $QD\le H:=\frac{\sqrt{N}}{\log^{B+1}N}$ so that we may apply Lemma \ref{rgrhlem} to \eqref{smallsieveeq}. This gives the desired result upon noting that $|A|>N/\log N$ (\cite[Lemma 42]{BJV22}).
\end{proof}
Equipped with this conditional lower bound on $S(A,P(z))$, we now finally prove Theorem \ref{conthm}.
\begin{proof}[Proof of Theorem \ref{conthm}]
    We set $X_2=\exp(\exp(5.077))$, $\alpha=0.285$, $B=2.01$, $u_0=30$ and $M=18$. With these choices of parameters $\epsilon=1.312\cdot 10^{-2}$ (Lemma \ref{1pm1lem}), $C_1(\epsilon)=559$, $\xi(z_0,M)\leq 1244$ (Lemma \ref{vjlem}) with $z_0=\exp(8.9)$) and
    \begin{equation*}
        f\left(M\left(\frac{1}{2}-\alpha\right)\right)=f(3.87)=0.97044\ldots\quad\text{(see \eqref{fs24})}.
    \end{equation*}
    Applying Theorem \ref{grhsbound} we then obtain
    \begin{equation*}
        S(A,P(z))>\frac{1.48U_NN}{\log^2 N}>0,
    \end{equation*}
    where we used that $|A|>N/\log N$ (Lemma \ref{Alem}).
    This means that, assuming GRH, every even $N\geq\exp(\exp(5.077))$ can be written as the sum of a prime and a number with at most $M-1=17$ prime factors.
    
    For $\exp(149)\leq N<\exp(\exp(5.077))$, we apply Corollary \ref{5060cor} and Proposition \ref{djshehzadprop} with $L+1=21$ to prove that $K=31$ works in this range. For ${\exp(104)\leq N<\exp(149)}$ we use the $L+1=13$ result in Corollary \ref{5060cor} and Proposition \ref{djshehzadprop} to obtain that $K = 31$ also works in this range. Finally, for $2<N<\exp(104)$ we use Lemma \ref{djshehzadlem} and obtain the even better value $K = 25$ in this range. Combining each case gives that $K=31$ works for all $N\geq 4$ as required.
\end{proof}

\section{Possible improvements}\label{improvsec}
With more work, it should be possible to improve our main results (Theorems \ref{unconthm} and \ref{conthm}). There are many avenues to do this, so in what follows we detail what we believe are some of the most impactful approaches. If the reader is interested in pursuing any of these avenues, the authors are very open to correspondence on the matter.

Before we begin, a general point is that we expect many of the explicit results that go into our proof to improve naturally in line with increased computational power. So in this regard, we remark that extending the computations of Platt \cite{platt2016numerical} regarding zeros of Dirichlet $L$-functions, would be a sure-fire way to improve the ingredients used for the unconditional result (Theorem \ref{unconthm}).

\subsection{Bounds on primes in arithmetic progressions}\label{PNTAPimprov}
The main bottleneck to improving the unconditional result is our existing bounds on the error term in the prime number theorem for arithmetic progressions. In our approach, we used the recent bounds obtained by Bordignon in \cite{Bordignon_21}. Certainly, one could get a small improvement in our results by extending Table 6 in \cite{Bordignon_21} to give more optimal parameters. However, on inspection, it appears that there are several other aspects of Bordignon's work that can be improved. 

Firstly, in \cite{Bordignon_21} the error term in the explicit formula \cite[(1)]{Bordignon_21} is obtained using a method due to Goldston \cite{Goldston_83}. However, an asymptotically better error term can be obtained from the work of Wolke \cite{Wolke_1983} and Ramar\'e \cite{Ramare_16_Perron}. An explicit form of such an error term was recently obtained by Cully-Hugill and the first author \cite{cully2023error, johnston2024error}.

Moreover, the zero-free regions for Dirichlet $L$-functions could be improved. Namely, there is recent work of Kadiri \cite{kadiri2018explicit} which could be built upon to give better bounds on Siegel/exceptional zeros compared to \cite[Theorem 1.1]{Bordignon_21}. This would also lead to a better (i.e.\ lower) value of $R_1$ that could be used in this work. The methods used in the recent work of Morrill and Trudgian \cite{morrill2020elementary} could also be useful in this regard.

\subsection{Explicit bounds on Siegel zeros}
In addition to the bounds one can obtain on Siegel zeros described in Section \ref{PNTAPimprov}, we also seek to improve bounds of the form
\begin{equation}\label{generalsiegelbound}
    \beta_q\leq 1-\frac{\lambda}{\sqrt{q}\log^2 q}
\end{equation}
where $\lambda$ is a positive constant, and $\beta_q$ is a (potential) Siegel zero mod $q$. This bound is that which appears in \eqref{siegelbound} and is an important component in the proof of Theorem \ref{unconthm}. For $q>4\cdot 10^5$, Bordignon \cite{bordignon2019explicit,bordignon2020explicit} shows that one can take $\lambda=100$ and this is what we use. Here, we note that for $q\leq 4\cdot 10^5$ there are no Siegel zeros by a computation due to Platt \cite{platt2016numerical}. However, the relevant computation in \cite{platt2016numerical} was only a side result of the main computation, meaning a more targeted approach could pay dividends\footnote{In fact, through private correspondence, Platt and Trudgian claim to have shown that there are no Siegel zeros for $q\leq 10^9$. However, this work has yet to be published.}. 

It also appears that the factor of $\log^2q$ can be removed from \eqref{generalsiegelbound} by using an approach due to Goldfeld and Schinzel \cite{goldfeld1975siegel}. This has already be done for odd characters in \cite{ralaivaosaona2020explicit} but a version that also works for even characters would be required in our setting.

\subsection{Bounds on sums and products of primes}
Another key component which goes into our results are bounds on 
\begin{equation*}
    \sum_{p<x} 1/p\qquad\text{and}\qquad\prod_{u\leq p<z}\left(1-\frac{1}{p-1}\right)^{-1}
\end{equation*}
(see Lemmas \ref{mertenlem} and \ref{1pm1lem}). Some of these bounds could be greatly improved by computation. For instance, if one were to extend the computation used for \cite[Lemma 16]{BJV22} to all $2\leq x\leq 10^{13}$, then the constant ($2.964\cdot 10^{-6}$) appearing in the bound
\begin{equation*}
     \sum_{p<x}\frac{1}{p}\geq\log\log x+M-\frac{2.964\cdot 10^{-6}}{\log x}
\end{equation*}
would be reduced to $1.483\cdot 10^{-6}$.

\subsection{Further exploration of sieve methods}
Throughout recent history there have been numerous sieve-theoretic approaches to the problem of expressing large even numbers as the sum of a prime and an almost prime. The overarching sieve used in this paper is the linear sieve, and we use the explicit version from \cite[\S 2]{BJV22}. One could further explore the existing literature on linear sieves (e.g.\ \cite{halberstam2011sieve,friedlander2010opera,lichtman2023primes}) and likely find an approach that is superior to the one here.

In this direction we also remark, as discussed in Section \ref{sectuncon}, that our approach works for at best $K=4$ prime factors. It would be interesting to explore simpler methods, such as those using Brun's sieve (e.g.\ \cite[\S 2.4]{halberstam2011sieve}), which fail asymptotically for such low values of $K$, but might give better explicit results than those in Theorems \ref{unconthm} and \ref{conthm}.

\printbibliography

\end{document}